\newtheorem{thm}{Theorem}[section]
\newtheorem{lem}[thm]{Lemma}
\newtheorem{prop}[thm]{Proposition}
\theoremstyle{definition}
\theoremstyle{remark}
\newtheorem{rem}{Remark}[section]
\newtheorem{defn}{Definition}
\newtheorem{exam}[rem]{Example}
\numberwithin{equation}{section}
\def\d{\mathrm d}
\def\e{\mathrm e}
\begin{document}

\title[Fractional wave equation with irregular mass and dissipation]{Fractional wave equation with irregular mass and dissipation}

\author[M. Ruzhansky]{Michael Ruzhansky}
\address{
  Michael Ruzhansky:
  \endgraf
  Department of Mathematics: Analysis, Logic and Discrete Mathematics
  \endgraf
  Ghent University, Krijgslaan 281, Building S8, B 9000 Ghent
  \endgraf
  Belgium
  \endgraf
  and
  \endgraf
  School of Mathematical Sciences
  \endgraf
  Queen Mary University of London
  \endgraf
  United Kingdom
  \endgraf
  {\it E-mail address} {\rm michael.ruzhansky@ugent.be}
}

\author[M. Sebih]{Mohammed Elamine Sebih}
\address{
  Mohammed Elamine Sebih:
  \endgraf
  Laboratory of Geomatics, Ecology and Environment (LGEO2E)
  \endgraf
  University Mustapha Stambouli of Mascara, 29000 Mascara
  \endgraf
  Algeria
  \endgraf
  {\it E-mail address} {\rm sebihmed@gmail.com, ma.sebih@univ-mascara.dz}
}

\author[N. Tokmagambetov ]{Niyaz Tokmagambetov}
\address{
  Niyaz Tokmagambetov:
  \endgraf 
  Centre de Recerca Matem\'atica
  \endgraf
  Edifici C, Campus Bellaterra, 08193 Bellaterra (Barcelona), Spain
  \endgraf
  and
  \endgraf   
  Institute of Mathematics and Mathematical Modeling
  \endgraf
  125 Pushkin str., 050010 Almaty, Kazakhstan
  \endgraf  
  {\it E-mail address:} {\rm tokmagambetov@crm.cat; tokmagambetov@math.kz}
  }

\thanks{This research was funded by the Science Committee of the Ministry of Education and Science of the Republic of Kazakhstan (Grant No. AP14872042), by the FWO Odysseus 1 grant G.0H94.18N: Analysis and Partial Differential Equations, and by the Methusalem programme of the Ghent University Special Research Fund (BOF) (Grant number 01M01021). MR is also supported by EPSRC grants EP/R003025/2 and EP/V005529/1. NT is also supported by the Beatriu de Pin\'os programme and by AGAUR (Generalitat de Catalunya) grant 2021 SGR 00087.}

\keywords{Telegraph equation, Cauchy problem, weak solution, Energy method, position dependent coefficients, singular mass, singular dissipation, regularisation, very weak solution.}
\subjclass[2010]{35L81, 35L05, 	35D30, 35A35.}

\begin{abstract}
In this paper, we pursue our series of papers aiming to show the applicability of the concept of very weak solutions. We consider a wave model with irregular position dependent mass and dissipation terms, in particular, allowing for $\delta$-like coefficients and prove that the problem has a very weak solution. Furthermore, we prove the uniqueness in an appropriate sense and the coherence of the very weak solution concept with classical theory. A special case of the model considered here, is the so-called telegraph equation.
\end{abstract}

\maketitle


\section{Introduction}
The telegraph equations are a system of coupled linear equations governing voltage and current flow on a linear electrical line. For $t$ denoting the time and $x$ the distance from any fixed point, and $v,\zeta$ the voltage and the current respectively, the equations are as follows
\begin{equation*}
    \bigg\lbrace
    \begin{array}{l}
    \partial_{x}\nu(t,x)=-L\partial_{t}\zeta(t,x)-R\zeta(t,x),\\
    \partial_{x}\zeta(t,x)=-C\partial_{t}\nu(t,x)-G\nu(t,x),
    \end{array}
\end{equation*}
where $L$ is the inductance, $C$ the capacitance, $R$ the resistance and $G$ stands for the conductance. When combined, a hyperbolic partial differential equation of the following form is obtained
\begin{equation}\label{Telegraph equation}
    \partial_{x}^2u(t,x) - LC\partial_{t}^2u(t,x) = (RC+GL)\partial_{t}u(t,x) + GR u(t,x),
\end{equation}
where $u$ represents either the voltage $\nu$ or the current $\zeta$. For the derivation of equations, we refer the reader to \cite{Elli23} for more details. The form \eqref{Telegraph equation} can be regarded as a wave equation with additional mass and dissipation terms. This form is widely used in the literature to study wave propagation phenomena and random walk theory. See, for instance \cite{WH93,BM98,MM93,RP95,SS09} and the references therein.

In the present paper we consider the telegraph equation in a more general case. That is, we use the fractional Laplacian instead of the classical one and for fixed $T>0$, we consider the Cauchy problem:
\begin{equation}\label{Equation intro}
    \bigg\lbrace
    \begin{array}{l}
    u_{tt}(t,x) + (-\Delta)^{s}u(t,x) + a(x)u(t,x) + b(x)u_{t}(t,x)=0, \,\, (t,x)\in [0,T]\times\mathbb{R}^d,\\
    u(0,x)=u_{0}(x),\quad u_{t}(0,x)=u_{1}(x), \quad x\in\mathbb{R}^d,
    \end{array}
\end{equation}
 where $s>0$. Motivated by the fact that mechanical and physical properties of nowadays materials can not be described by smooth functions due to the non-homogeneity of the material structure, the spatially dependent mass $a$ and the dissipation coefficient $b$ in \eqref{Equation intro} are assumed to be non-negative and singular, in particular to have $\delta$-like behaviours. Our aim is to prove that this problem is well posed in the sense of the very weak solution concept introduced in \cite{GR15} by Garetto and the first author in order to give a neat solution to the problem of multiplication that Schwartz theory of distributions is concerned with, see \cite{Sch54}, and to provide a framework in which partial differential equations involving coefficients and data of low regularity can be rigorously studied. Let us give a brief literature review about this concept of solutions. After the original work of Garetto and Ruzhansky \cite{GR15}, many researchers started using this notion of solutions for different situations, either for abstract mathematical problems as \cite{CRT21,CRT22a,CRT22b} or for physical models as in \cite{RT17a,RT17b,Gar21} and \cite{MRT19,ART19,ARST21a,ARST21b,ARST21c} where it is shown that the concept of very weak solutions is very suitable for numerical modelling, and in \cite{SW22} where the question of propagation of coefficients singularities of the very weak solution is studied. More recently, we cite \cite{GLO21,BLO22,RSY22,RY22,CDRT23}.

The novelty of this work lies in the fact that we consider equations that can not be formulated in the classical or the distributional sense. We employ the concept of very weak solutions which allows to overcome the problem of the impossibility of multiplication of distributions. Furthermore, the results obtained in this paper extend those of \cite{ARST21a}, firstly by incorporating a dissipation term, and secondly by relaxing the assumptions on the Cauchy data, allowing them to be as singular as the equation coefficients, whereas in \cite{ARST21a} they were supposed to be smooth functions.

\section{Preliminaries}
For the reader's convenience, we review in this section notations and notions that are frequently used in the sequel.
\subsection{Notation}
\begin{itemize}
    \item By the notation $f\lesssim g$, we mean that there exists a positive constant $C$, such that $f \leq Cg$ independently on $f$ and $g$.
    \item We also define
    \begin{equation*}
        \Vert u(t,\cdot)\Vert_1 := \Vert u(t,\cdot)\Vert_{L^2} + \Vert (-\Delta)^{\frac{s}{2}}u(t,\cdot)\Vert_{L^2} + \Vert u_t(t,\cdot)\Vert_{L^2},
    \end{equation*}
    and
    \begin{equation*}
        \Vert u(t,\cdot)\Vert_2 := \Vert u(t,\cdot)\Vert_{L^2} + \Vert (-\Delta)^{\frac{s}{2}}u(t,\cdot)\Vert_{L^2} + \Vert (-\Delta)^{s}u(t,\cdot)\Vert_{L^2} + \Vert u_t(t,\cdot)\Vert_{L^2}.
    \end{equation*}
\end{itemize}
We also recall the well-known H\"older inequality.
\begin{prop}\label{Holder inequality}
    Let $r\in(0,\infty)$ and $p,q\in (0,\infty)$ be such that $\frac{1}{r}=\frac{1}{p} + \frac{1}{q}$. Assume that $f\in L^{p}(\mathbb{R}^d)$ and $g\in L^{q}(\mathbb{R}^d)$, then, $fg\in L^{r}(\mathbb{R}^d)$ and we have
    \begin{equation}
        \Vert fg\Vert_{L^r}\leq \Vert f\Vert_{L^p}\Vert g\Vert_{L^q}.
    \end{equation}
\end{prop}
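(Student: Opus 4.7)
The plan is to reduce the statement to the classical H\"older inequality for conjugate exponents. The key algebraic observation is that $\tfrac{1}{r}=\tfrac{1}{p}+\tfrac{1}{q}$ is equivalent to $\tfrac{r}{p}+\tfrac{r}{q}=1$, so that $p/r$ and $q/r$ are conjugate exponents; moreover, since $\tfrac{1}{p}>0$ and $\tfrac{1}{q}>0$, we have $\tfrac{1}{r}>\tfrac{1}{p}$ and $\tfrac{1}{r}>\tfrac{1}{q}$, hence $p/r,\,q/r\in(1,\infty)$.

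First I would perform the substitution $F:=|f|^{r}$ and $G:=|g|^{r}$. Then $F\in L^{p/r}(\mathbb{R}^d)$ with $\Vert F\Vert_{L^{p/r}}^{p/r}=\Vert f\Vert_{L^p}^{p}$, and similarly $G\in L^{q/r}(\mathbb{R}^d)$ with $\Vert G\Vert_{L^{q/r}}^{q/r}=\Vert g\Vert_{L^q}^{q}$. Since both sides of the target inequality are non-negative, the estimate $\Vert fg\Vert_{L^r}\leq\Vert f\Vert_{L^p}\Vert g\Vert_{L^q}$ is equivalent, after raising to the power $r$, to
\begin{equation*}
    \int_{\mathbb{R}^d}|FG|\,\d x \leq \Vert F\Vert_{L^{p/r}}\Vert G\Vert_{L^{q/r}},
\end{equation*}
which is precisely the classical H\"older inequality with conjugate exponents $p/r$ and $q/r$ lying in $(1,\infty)$.

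Second, to establish this classical case I would invoke Young's inequality $ab\leq\tfrac{a^{\tilde p}}{\tilde p}+\tfrac{b^{\tilde q}}{\tilde q}$ pointwise with $\tilde p=p/r$ and $\tilde q=q/r$, applied to $a=|F(x)|/\Vert F\Vert_{L^{\tilde p}}$ and $b=|G(x)|/\Vert G\Vert_{L^{\tilde q}}$ (the degenerate cases where one of the norms vanishes being trivial), and then integrate over $\mathbb{R}^d$. The right-hand side becomes $\tfrac{1}{\tilde p}+\tfrac{1}{\tilde q}=1$, yielding the desired bound. Young's inequality itself follows from the concavity of $\log$ on $(0,\infty)$ applied to the convex combination $\tfrac{1}{\tilde p}\log(a^{\tilde p})+\tfrac{1}{\tilde q}\log(b^{\tilde q})$.

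Since this is a standard textbook result, there is no genuine obstacle to overcome; the only technical point worth highlighting is the verification that $p/r$ and $q/r$ strictly exceed one, which legitimizes the use of the conjugate-exponent version of H\"older and is precisely what is guaranteed by the strict positivity of $\tfrac{1}{p}$ and $\tfrac{1}{q}$ in the hypothesis.
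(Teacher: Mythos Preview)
Your argument is correct and is the standard textbook derivation. Note, however, that the paper does not actually supply a proof of this proposition: it is merely recalled as ``the well-known H\"older inequality'' and stated without proof, so there is nothing to compare against beyond observing that your approach is the conventional one.
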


\subsection{The fractional Sobolev space $H^s$ and the fractional Laplacian}
\begin{defn}[\textbf{Fractional Sobolev space}]\label{Def. frac. sobolev space}
    Given $s>0$, the fractional Sobolev space is defined by
    \begin{equation*}
        H^s(\mathbb{R}^d) = \big\{f\in L^2(\mathbb{R}^d) : \int_{\mathbb{R}^d}(1+\vert \xi\vert^{2s})|\widehat{f}(\xi)|^2\d \xi < +\infty\big\},
    \end{equation*}
    where $\widehat{f}$ denotes the Fourier transform of $f$.
\end{defn}
We note that, the fractional Sobolev space $H^s$ endowed with the norm
    \begin{equation}\label{Norm H^s}
        \Vert f\Vert_{H^s}:=\bigg(\int_{\mathbb{R}^d}(1+\vert \xi\vert^{2s})|\widehat{f}(\xi)|^2\d \xi\bigg)^{\frac{1}{2}},\quad\text{for } f\in H^s(\mathbb{R}^d),
    \end{equation}
    is a Hilbert space.
\begin{defn}[\textbf{Fractional Laplacian}]\label{Def. fract. laplacian}
    For $s>0$, $(-\Delta)^s$ denotes the fractional Laplacian defined by
    \begin{equation*}
    (-\Delta)^{s}f = \mathcal{F}^{-1}(\vert\xi\vert^{2s}(\widehat{f})),
    \end{equation*}
    for all $\xi\in \mathbb{R}^d$.
\end{defn}
In other words, the fractional Laplacian $(-\Delta)^s$ can be viewed as the pseudo-differential operator with symbol $\vert \xi\vert^{2s}$. With this definition and the Plancherel theorem, the fractional Sobolev space can be defined as:
    \begin{equation}
        H^{s}(\mathbb{R}^{d})=\big\{ f\in L^{2}(\mathbb{R}^{d}): (-\Delta)^{\frac{s}{2}}f \in L^{2}(\mathbb{R}^{d})\big\},
    \end{equation}
    moreover, the norm
    \begin{equation}
        \Vert f\Vert_{H^{s}}:=\Vert f\Vert_{L^2}+\Vert (-\Delta)^{\frac{s}{2}}f\Vert_{L^2},
    \end{equation}
    is equivalent to the one defined in \eqref{Norm H^s}.

\begin{rem}
    We note that the fractional Sobolev space $H^s(\mathbb{R}^d)$ can also be defined via the Gagliardo norm, however, we chose this approach, since it is valid for any real $s>0$, unlike the one via Gagliardo norm which is valid only for $s\in (0,1)$. We refer the reader to \cite{DPV12,Gar18,Kwa17} for more details and alternative definitions.
\end{rem}

\begin{prop}[\textbf{Fractional Sobolev inequality}, e.g. Theorem 1.1. \cite{CT04}]\label{Prop. Sobolev estimate}
    For $d\in \mathbb{N}_0$ and $s\in \mathbb{R}_+$, let $d>2s$ and $q=\frac{2d}{d-2s}$. Then, the estimate
    \begin{equation}\label{Sobolev estimate}
        \Vert f\Vert_{L^q} \leq C(d,s)\Vert (-\Delta)^{\frac{s}{2}}f\Vert_{L^2},
    \end{equation}
    holds for all $f\in H^{s}(\mathbb{R}^d)$, where the constant $C$ depends only on the dimension $d$ and the order $s$.
\end{prop}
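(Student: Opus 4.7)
The plan is to prove the estimate by identifying $f$ with the Riesz potential of $g:=(-\Delta)^{s/2}f$ and then invoking the Hardy--Littlewood--Sobolev (HLS) inequality. First I would note that since $f\in H^s(\mathbb{R}^d)$, the function $g$ lies in $L^2(\mathbb{R}^d)$ with $\widehat{g}(\xi)=|\xi|^s\widehat{f}(\xi)$, so $\widehat{f}(\xi)=|\xi|^{-s}\widehat{g}(\xi)$. Using the fact that (for $0<s<d$) the tempered distribution $|\xi|^{-s}$ is, up to an explicit constant $c_{d,s}$, the Fourier transform of $|x|^{-(d-s)}$, this Fourier identity can be rewritten on the physical side as
\begin{equation*}
f(x)=c_{d,s}\int_{\mathbb{R}^d}\frac{g(y)}{|x-y|^{d-s}}\,\d y = I_s g(x),
\end{equation*}
where $I_s$ denotes the Riesz potential of order $s$. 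A preliminary density argument reduces the computation to Schwartz functions, so that all manipulations are justified.

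Next, with $q=\frac{2d}{d-2s}$ satisfying $\tfrac{1}{q}=\tfrac{1}{2}-\tfrac{s}{d}$ (which is exactly the scaling condition $d>2s$ imposes), the HLS inequality gives $\|I_s g\|_{L^q}\lesssim \|g\|_{L^2}$, and substituting $g=(-\Delta)^{s/2}f$ yields \eqref{Sobolev estimate}. Scale-invariance is the easy check: under $f_\lambda(x):=f(\lambda x)$ both sides transform by the same power of $\lambda$, which confirms that the exponent $q$ is forced.

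The genuine work, and the main obstacle, is proving the HLS bound itself. My plan is to first establish the weak-type endpoint $\|I_s g\|_{L^{q,\infty}}\lesssim \|g\|_{L^2}$. For this I would split
\begin{equation*}
|I_s g(x)| \lesssim \int_{|x-y|\le R}\frac{|g(y)|}{|x-y|^{d-s}}\,\d y + \int_{|x-y|>R}\frac{|g(y)|}{|x-y|^{d-s}}\,\d y,
\end{equation*}
bound the first integral pointwise by $R^{s}\,\mathcal{M}g(x)$ (where $\mathcal{M}$ is the Hardy--Littlewood maximal operator) and the second by Cauchy--Schwarz by a multiple of $R^{s-d/2}\|g\|_{L^2}$; optimising in $R$ yields $|I_s g(x)|\lesssim \mathcal{M}g(x)^{1-2s/d}\|g\|_{L^2}^{2s/d}$, from which the weak-type bound follows via the maximal inequality. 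Marcinkiewicz interpolation between this endpoint (applied to a pair of $L^p$-spaces straddling $p=2$ using the same splitting argument) then upgrades to the strong-type inequality. Alternatively one may invoke symmetric decreasing rearrangement and Riesz's rearrangement inequality to reduce to a radial computation; the first route is more self-contained and is the one I would write out.
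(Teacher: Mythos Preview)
The paper does not supply its own proof of this proposition; it is simply quoted from \cite{CT04} as a known result. Your approach via the Riesz potential representation $f=I_s g$ with $g=(-\Delta)^{s/2}f$ and the Hardy--Littlewood--Sobolev inequality is the standard and correct route, and your sketch of the HLS bound through the pointwise Hedberg-type estimate $|I_s g(x)|\lesssim \mathcal{M}g(x)^{1-2s/d}\|g\|_{L^2}^{2s/d}$ is sound.

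One simplification: once you have that pointwise inequality, the strong-type bound follows \emph{directly} by raising to the $q$-th power and integrating, since $(1-2s/d)q=2$ and the maximal operator is bounded on $L^2$; the detour through a weak-type endpoint and Marcinkiewicz interpolation is unnecessary. Also, your density reduction is fine, but you should say explicitly how you pass to the limit for general $f\in H^s$ (e.g.\ via Fatou's lemma on the $L^q$ side after approximating in $H^s$).
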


\subsection{Duhamel's principle}
We prove the following special version of Duhamel's principle that will frequently be used throughout this paper. For more general versions of this principle,  we refer the reader to \cite{ER18}. Let us consider the following Cauchy problem,
\begin{equation}\label{Equation Duhamel}
    \left\lbrace
    \begin{array}{l}
    u_{tt}(t,x)+\lambda(x)u_{t}(t,x)+Lu(t,x)=f(t,x) ,~~~(t,x)\in\left(0,\infty\right)\times \mathbb{R}^{d},\\
    u(0,x)=u_{0}(x),\,\,\, u_{t}(0,x)=u_{1}(x),\,\,\, x\in\mathbb{R}^{d},
    \end{array}
    \right.
\end{equation}
for a given function $\lambda$ and $L$ is a linear partial differential operator acting over the spatial variable.

\begin{prop}\label{Prop Duhamel}
The solution to the Cauchy problem (\ref{Equation Duhamel}) is given by
\begin{equation}\label{Sol Duhamel}
    u(t,x)= w(t,x) + \int_0^t v(t,x;\tau)\d \tau,
\end{equation}
where $w(t,x)$ is the solution to the homogeneous problem
\begin{equation}\label{Homog eqn Duhamel}
    \left\lbrace
    \begin{array}{l}
    w_{tt}(t,x)+\lambda(x)w_{t}(t,x)+Lw(t,x)=0 ,~~~(t,x)\in\left(0,\infty\right)\times \mathbb{R}^{d},\\
    w(0,x)=u_{0}(x),\,\,\, w_{t}(0,x)=u_{1}(x),\,\,\, x\in\mathbb{R}^{d},
    \end{array}
    \right.
\end{equation}
and $v(t,x;\tau)$ solves the auxiliary Cauchy problem
\begin{equation}\label{Aux eqn Duhamel}
    \left\lbrace
    \begin{array}{l}
    v_{tt}(t,x;\tau)+\lambda(x)v_{t}(t,x;\tau)+Lv(t,x;\tau)=0 ,~~~(t,x)\in\left(\tau,\infty\right)\times \mathbb{R}^{d},\\
    v(\tau,x;\tau)=0,\,\,\, v_{t}(\tau,x;\tau)=f(\tau,x),\,\,\, x\in\mathbb{R}^{d},
    \end{array}
    \right.
\end{equation}
where $\tau$ is a parameter varying over $\left(0,\infty\right)$.
\end{prop}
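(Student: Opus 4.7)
The plan is to verify the candidate formula \eqref{Sol Duhamel} by direct computation: differentiate twice in $t$ using the Leibniz rule on the parameter-dependent integral, use the initial conditions of the auxiliary problem to handle the boundary terms, and then use the PDE satisfied by $v(\,\cdot\,,\,\cdot\,;\tau)$ to reduce the integrand to zero.

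First I would set $U(t,x):=\int_0^t v(t,x;\tau)\,\d\tau$ and, assuming enough regularity in $t$ and $\tau$ to differentiate under the integral, apply the Leibniz rule to obtain
\begin{equation*}
  U_t(t,x)=v(t,x;t)+\int_0^t v_t(t,x;\tau)\,\d\tau=\int_0^t v_t(t,x;\tau)\,\d\tau,
\end{equation*}
where the boundary term vanishes thanks to the initial condition $v(\tau,x;\tau)=0$ in \eqref{Aux eqn Duhamel}. Differentiating once more and using $v_t(\tau,x;\tau)=f(\tau,x)$, I would get
\begin{equation*}
  U_{tt}(t,x)=f(t,x)+\int_0^t v_{tt}(t,x;\tau)\,\d\tau.
\end{equation*}

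Next I would apply $\lambda(x)\partial_t+L$ to $U$, interchange $L$ with the $\tau$-integral (which is legitimate since $L$ acts only on the spatial variable), and combine everything into a single integrand equal to $v_{tt}+\lambda v_t+Lv$, which vanishes by the auxiliary equation \eqref{Aux eqn Duhamel}. This leaves exactly $U_{tt}+\lambda(x)U_t+LU=f$. Adding the contribution of $w$, which satisfies the homogeneous equation \eqref{Homog eqn Duhamel}, yields the inhomogeneous equation in \eqref{Equation Duhamel}. The initial conditions are immediate: $u(0,x)=w(0,x)+U(0,x)=u_0(x)+0$, and $u_t(0,x)=w_t(0,x)+U_t(0,x)=u_1(x)+0$, again by the vanishing of the boundary term.

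The only delicate point is the justification of differentiation under the integral sign and of the interchange of $L$ with the $\tau$-integral; in the present framework this will not be a real obstacle because the proposition is applied later to sufficiently regular (regularised) data and coefficients, for which $v(t,x;\tau)$ depends smoothly enough on $t,\tau$. I would therefore simply remark that the identities above are valid under the standard regularity assumptions that will be in force whenever the proposition is invoked.
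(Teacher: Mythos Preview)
Your argument is correct and is essentially the same as the paper's: both proofs differentiate the formula \eqref{Sol Duhamel} twice in $t$ via the Leibniz rule, use the initial conditions $v(\tau,x;\tau)=0$ and $v_t(\tau,x;\tau)=f(\tau,x)$ to evaluate the boundary terms, apply $L$ under the integral, and then invoke the equations for $w$ and $v$ together with their initial data. The only cosmetic difference is that you isolate $U(t,x)=\int_0^t v(t,x;\tau)\,\d\tau$ and add a remark on the regularity needed to differentiate under the integral, which the paper leaves implicit.
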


\begin{proof}
    Firstly, we apply $\partial_{t}$ to $u$ in \eqref{Sol Duhamel}. We get
    \begin{equation}\label{sol Duhamel u_t}
        \partial_t u(t,x)=\partial_t w(t,x) + \int_0^t \partial_t v(t,x;\tau)\d \tau,
    \end{equation}
    and accordingly
    \begin{equation}\label{sol Duhamel lambdau_t}
        \lambda(x)\partial_t u(t,x)=\lambda(x)\partial_t w(t,x) + \int_0^t \lambda(x)\partial_t v(t,x;\tau)\d \tau,
    \end{equation}
    where we used the fact that $v(t,x;t)=0$ by the imposed initial condition in \eqref{Aux eqn Duhamel}. We differentiate again \eqref{sol Duhamel u_t} with respect to $t$ to get
    \begin{equation}\label{sol Duhamel u_tt}
        \partial_{tt} u(t,x)=\partial_{tt} w(t,x) + f(t,x) + \int_0^t \partial_{tt} v(t,x;\tau)\d \tau,
    \end{equation}
    where we used that $\partial_t v(t,x;t) = f(t,x)$. Now, applying $L$ to $u$ in \eqref{Sol Duhamel} gives
    \begin{equation}\label{sol Duhamel L}
        L u(t,x)=L w(t,x) + \int_0^t L v(t,x;\tau)\d \tau.
    \end{equation}
    By adding \eqref{sol Duhamel u_tt}, \eqref{sol Duhamel L} and \eqref{sol Duhamel lambdau_t} with in mind that $w$ and $v$ satisfy the equations in \eqref{Homog eqn Duhamel} and \eqref{Aux eqn Duhamel}, we get
    \begin{equation*}
        u_{tt}(t,x)+\lambda(x)u_{t}(t,x)+Lu(t,x)=f(t,x).
    \end{equation*}
    It remains to prove that $u$ satisfy the initial conditions. Indeed, from \eqref{Sol Duhamel} and \eqref{sol Duhamel u_t}, we have that $u(0,x)=w(0,x)=u_0(x)$ and that $u_t(0,x)=\partial_t w(0,x)=u_1(x)$. This concludes the proof.    
\end{proof}

\begin{rem}
    We note that the above statement of Duhamel's principle can be extended to differential operators of order $k\in \mathbb{N}$. Indeed, if we consider the Cauchy problem
    \begin{equation*}\label{Equation Duhamel general}
        \left\lbrace
        \begin{array}{l}        \partial_{t}^{k}u(t,x)+\sum_{j=1}^{k-1}\lambda_{j}(x)\partial_{t}^{j} u(t,x)+Lu(t,x)=f(t,x) ,~~~(t,x)\in\left(0,\infty\right)\times \mathbb{R}^{d},\\
        \partial_{t}^{j}u(0,x)=u_{j}(x),\,\text{for}\quad j=0,\cdots,k-1, \quad x\in\mathbb{R}^{d},
        \end{array}
        \right.
    \end{equation*}
    then, the solution is given by
\begin{equation*}\label{Sol Duhamel general}
    u(t,x)= w(t,x) + \int_0^t v(t,\tau;\tau)\d \tau,
\end{equation*}
where $w(t,x)$ is the solution to the homogeneous problem
\begin{equation*}\label{Homog eqn Duhamel general}
    \left\lbrace
    \begin{array}{l}
    \partial_{t}^{k}w(t,x)+\sum_{j=1}^{k-1}\lambda_{j}(x)\partial_{t}^{j}w(t,x)+Lw(t,x)=0 ,~~~(t,x)\in\left(0,\infty\right)\times \mathbb{R}^{d},\\
    \partial_{t}^{j}w(0,x)=u_{j}(x),\,\text{for}\quad j=0,\cdots,k-1, \quad x\in\mathbb{R}^{d},
    \end{array}
    \right.
\end{equation*}
and $v(t,x;\tau)$ solves the auxiliary Cauchy problem
\begin{equation*}\label{Aux eqn Duhamel general}
    \left\lbrace
    \begin{array}{l}
    \partial_{t}^{k}v(t,x;\tau)+\sum_{j=1}^{k-1}\lambda_{j}(x)\partial_{t}^{j}v_{t}(t,x;\tau)+Lv(t,x;\tau)=0 ,~~~(t,x)\in\left(\tau,\infty\right)\times \mathbb{R}^{d},\\
    \partial_{t}^{j}w(\tau,x;\tau)=0,\,\text{for}\quad j=0,\cdots,k-2,,\,\,\, \partial_{t}^{k-1}w(\tau,x;\tau)=f(\tau,x),\,\,\, x\in\mathbb{R}^{d},
    \end{array}
    \right.
\end{equation*}
where $\tau\in \left(0,\infty\right)$.
\end{rem}

\subsection{Energy estimates for the classical solution}
In order to prove existence and uniqueness of a very weak solution to the Cauchy problem \eqref{Equation intro} as well as the coherence with classical theory we will often use the following lemmas that are stated in the case when the mass $a$ and the dissipation coefficient $b$ are regular functions. The statements of the lemmas are given under different assumptions on $a$ and $b$.
\begin{lem}\label{Lemma1}
    Let $a,b\in L^{\infty}(\mathbb{R}^d)$ be non-negative and suppose that $u_0 \in H^{s}(\mathbb{R}^d)$ and $u_1 \in L^{2}(\mathbb{R}^d)$. Then the unique solution $u\in C([0,T];H^{s}(\mathbb{R}^d))\cap C^1([0,T];L^{2}(\mathbb{R}^d))$ to the Cauchy problem \eqref{Equation intro} satisfies the estimate
    \begin{equation}\label{Energy estimate}
         \Vert u(t,\cdot)\Vert_1 \lesssim \Big(1+\Vert a\Vert_{L^{\infty}}\Big)\Big(1+\Vert b\Vert_{L^{\infty}}\Big)\bigg[\Vert u_0\Vert_{H^s} + \Vert u_{1}\Vert_{L^2}\bigg],
    \end{equation}
     for all $t\in [0,T]$.
\end{lem}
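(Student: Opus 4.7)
The plan is to run the standard energy method adapted to the fractional setting and to the position-dependent zero- and first-order terms, then absorb the constants in the statement.

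First I would introduce the modified energy
\begin{equation*}
E(t) := \|u(t,\cdot)\|_{L^2}^2 + \|u_t(t,\cdot)\|_{L^2}^2 + \|(-\Delta)^{s/2}u(t,\cdot)\|_{L^2}^2 + \|\sqrt{a}\,u(t,\cdot)\|_{L^2}^2,
\end{equation*}
which is finite at $t=0$ since $a\in L^\infty$, $u_0\in H^s$, $u_1\in L^2$, and whose square root controls $\|u(t,\cdot)\|_1$. Note that $E(0)\lesssim(1+\|a\|_{L^\infty})\bigl(\|u_0\|_{H^s}^2+\|u_1\|_{L^2}^2\bigr)$.

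Next I would differentiate $E$ in $t$, using the PDE to replace $u_{tt}$ by $-(-\Delta)^s u-au-bu_t$, and exploit the self-adjointness of $(-\Delta)^{s/2}$ to write $2\,\mathrm{Re}\langle(-\Delta)^s u,u_t\rangle_{L^2}=\tfrac{d}{dt}\|(-\Delta)^{s/2}u\|_{L^2}^2$. The $\langle au,u_t\rangle$ cross-terms cancel against $\tfrac{d}{dt}\|\sqrt{a}\,u\|_{L^2}^2$ (here one uses $a\geq 0$ so that $\sqrt{a}\in L^\infty$ and time-independent), leaving
\begin{equation*}
\frac{d}{dt}E(t)=2\,\mathrm{Re}\langle u,u_t\rangle_{L^2}-2\int_{\mathbb{R}^d}b(x)|u_t(t,x)|^2\,dx\leq 2\,\mathrm{Re}\langle u,u_t\rangle_{L^2}\leq E(t),
\end{equation*}
where we dropped the dissipative term using $b\geq 0$ and bounded the remaining cross-term by Cauchy–Schwarz/Young. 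Grönwall's inequality then yields $E(t)\leq e^{T}E(0)$ on $[0,T]$, and taking square roots together with $\sqrt{1+\|a\|_{L^\infty}}\leq 1+\|a\|_{L^\infty}$ gives the estimate with the prefactor $(1+\|a\|_{L^\infty})$; the factor $(1+\|b\|_{L^\infty})$ is free since the bound is actually sharper in $b$ (dissipation helps).

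The main subtlety, rather than a true obstacle, is the bookkeeping of the $\|a\|_{L^\infty}$ factor: because $\|\sqrt{a}u\|_{L^2}^2\leq\|a\|_{L^\infty}\|u\|_{L^2}^2$ enters quadratically in $E(0)$, taking a square root produces only $(1+\|a\|_{L^\infty})^{1/2}$, which is then absorbed into the looser factor $(1+\|a\|_{L^\infty})$ stated in \eqref{Energy estimate}. Uniqueness follows from the linearity of the equation together with the same energy inequality applied to the difference of two solutions with zero data, for which $E(0)=0$ forces $E(t)\equiv 0$.
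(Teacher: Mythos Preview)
Your proof is correct and takes a genuinely different, more elementary route than the paper's. The paper works with the smaller energy $E(t)=\|u_t\|_{L^2}^2+\|(-\Delta)^{s/2}u\|_{L^2}^2+\|\sqrt{a}\,u\|_{L^2}^2$ (without the $\|u\|_{L^2}^2$ term), for which one gets exact decay $\partial_t E\leq 0$; this controls $\|u_t\|_{L^2}$ and $\|(-\Delta)^{s/2}u\|_{L^2}$ but not $\|u\|_{L^2}$ itself, so the paper then passes to Fourier variables, rewrites the equation as $\widehat{u}_{tt}+|\xi|^{2s}\widehat{u}=\widehat{f}$ with $f=-au-bu_t$, uses the explicit $\cos/\sin$ Duhamel representation, and estimates $\|f\|_{L^2}$ via $\|bu_t\|_{L^2}\leq\|b\|_{L^\infty}\|u_t\|_{L^2}$---which is precisely where the factor $(1+\|b\|_{L^\infty})$ enters their bound. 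By including $\|u\|_{L^2}^2$ in the energy from the outset you trade exact decay for a Gr\"onwall bound with a harmless $e^T$, avoid Fourier analysis and Duhamel entirely, and obtain an estimate that is actually independent of $\|b\|_{L^\infty}$; the stated factor $(1+\|b\|_{L^\infty})$ is then, as you note, a free majorant. The paper's detour buys the sharper qualitative statement of genuine energy decay (and a template it reuses in the subsequent lemma), while your argument buys simplicity and a tighter dependence on $b$ for this particular estimate.
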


\begin{proof}
    Multiplying the equation in \eqref{Equation intro} by $u_t$ and integrating with respect to the variable $x$ over $\mathbb{R}^d$ and taking the real part, we get
    \begin{align} \label{Energy functional}
        Re\Bigg(\langle u_{tt}(t,\cdot),&u_{t}(t,\cdot)\rangle_{L^2} + \langle (-\Delta)^{s}u(t,\cdot),u_{t}(t,\cdot)\rangle_{L^2}\\
        & + \langle a(\cdot)u(t,\cdot),u_{t}(t,\cdot)\rangle_{L^2} + \langle b(\cdot)u_{t}(t,\cdot),u_{t}(t,\cdot)\rangle_{L^2}\Bigg) = 0.\nonumber
    \end{align}
    We easily see that
    \begin{equation}\label{En1}
        Re\langle u_{tt}(t,\cdot),u_{t}(t,\cdot)\rangle_{L^2} = \frac{1}{2}\partial_{t}\langle u_{t}(t,\cdot),u_{t}(t,\cdot)\rangle_{L^2} = \frac{1}{2}\partial_{t}\Vert u_{t}(t,\cdot)\Vert_{L^2}^2,
    \end{equation}
    and
    \begin{align}\label{En2}
        Re\langle (-\Delta)^{s}u(t,\cdot),u_{t}(t,\cdot)\rangle_{L^2} &= \frac{1}{2}\partial_{t}\langle (-\Delta)^{\frac{s}{2}}u(t,\cdot),(-\Delta)^{\frac{s}{2}}u(t,\cdot)\rangle_{L^2} \\
        & = \frac{1}{2}\partial_{t}\Vert (-\Delta)^{\frac{s}{2}}u(t,\cdot)\Vert_{L^2}^2,
    \end{align}
    where we used the self-adjointness of the operator $(-\Delta)^s$. For the remaining terms in \eqref{Energy functional}, we have
    \begin{equation}\label{En3}
        Re\langle a(\cdot)u(t,\cdot),u_{t}(t,\cdot)\rangle_{L^2} = \frac{1}{2}\partial_t\Vert a^{\frac{1}{2}}(\cdot)u(t,\cdot)\Vert_{L^2}^2,
    \end{equation}
    and
    \begin{equation}\label{En4}
        Re\langle b(\cdot)u_{t}(t,\cdot),u_{t}(t,\cdot)\rangle_{L^2} = \Vert b^{\frac{1}{2}}(\cdot)u_{t}(t,\cdot)\Vert_{L^2}^2.
    \end{equation}
    By substituting \eqref{En1},\eqref{En2},\eqref{En3} and \eqref{En4} in \eqref{Energy functional} we get
    \begin{equation}\label{Energy functional1}
        \partial_{t}\Big[ \Vert u_{t}(t,\cdot)\Vert_{L^2}^2 + \Vert (-\Delta)^{\frac{s}{2}}u(t,\cdot)\Vert_{L^2}^2 + \Vert a^{\frac{1}{2}}(\cdot)u(t,\cdot)\Vert_{L^2}^2\Big] = -2\Vert b^{\frac{1}{2}}(\cdot)u_{t}(t,\cdot)\Vert_{L^2}^2.
    \end{equation}
    Let us denote
    \begin{equation}
        E(t):= \Vert u_{t}(t,\cdot)\Vert_{L^2}^2 + \Vert (-\Delta)^{\frac{s}{2}}u(t,\cdot)\Vert_{L^2}^2 + \Vert a^{\frac{1}{2}}(\cdot)u(t,\cdot)\Vert_{L^2}^2,
    \end{equation}
    the energy function of the system \eqref{Equation intro}. It follows from \eqref{Energy functional1} that $\partial_{t}E(t)\leq 0$ and consequently that we have a decay of energy, that is: $E(t)\leq E(0)$ for all $t\in [0,T]$. By taking into consideration the estimate
    \begin{equation}
        \Vert a^{\frac{1}{2}}(\cdot)u_{0}\Vert_{L^2}^2 \leq \Vert a\Vert_{L^{\infty}}\Vert u_0\Vert_{L^2}^2,
    \end{equation}
    it follows that all terms in $E(t)$ satisfy the estimates:
    \begin{align}\label{Energy1}
        \Vert a^{\frac{1}{2}}(\cdot)u(t,\cdot)\Vert_{L^2}^2 & \lesssim \Vert u_{1}\Vert_{L^2}^2 + \Vert (-\Delta)^{\frac{s}{2}}u_0\Vert_{L^2}^2 + \Vert a\Vert_{L^{\infty}}\Vert u_0\Vert_{L^2}^2\\
        & \lesssim \Vert u_{1}\Vert_{L^2}^2 + \Vert u_0\Vert_{H^s}^2 + \Vert a\Vert_{L^{\infty}}\Vert u_0\Vert_{H^s}^2\nonumber\\
        & \lesssim \big(1+\Vert a\Vert_{L^{\infty}}\big)\big[\Vert u_0\Vert_{H^s}^2 + \Vert u_{1}\Vert_{L^2}^2\big]\nonumber\\
        & \lesssim \big(1+\Vert a\Vert_{L^{\infty}}\big)\big[\Vert u_0\Vert_{H^s} + \Vert u_{1}\Vert_{L^2}\big]^2,\nonumber
    \end{align}
    as well as
    \begin{equation}\label{Energy2}
        \bigg\{\Vert u_{t}(t,\cdot)\Vert_{L^2}^2, \Vert (-\Delta)^{\frac{s}{2}}u(t,\cdot)\Vert_{L^2}^2\bigg\} \lesssim \big(1+\Vert a\Vert_{L^{\infty}}\big)\big[\Vert u_0\Vert_{H^s} + \Vert u_{1}\Vert_{L^2}\big]^2,
    \end{equation}
    uniformly in $t\in [0,T]$, where we used the fact that: $\lambda^2 + \gamma^2 \leq (\lambda+\gamma)^2$ for all $\lambda,\gamma\in \mathbb{R}_+$ and
    \begin{equation*}
        \Big\{ \Vert (-\Delta)^{\frac{s}{2}}u_0\Vert_{L^2}, \Vert u_0\Vert_{L^2}\Big\} \leq \Vert u_0\Vert_{H^s}.
    \end{equation*}
    We now need to estimate $u$. For this purpose, we apply the Fourier transform to \eqref{Equation intro} with respect to the variable $x$ to get the non-homogeneous ordinary differential equation
    \begin{equation}\label{Equation Fourier transf}
        \widehat{u}_{tt}(t,\xi)+\vert \xi\vert^{2s}\widehat{u}(t,\xi)=\widehat{f}(t,\xi), \quad (t,\xi)\in [0,T]\times\mathbb{R}^d,
    \end{equation}
    with the initial conditions $\widehat{u}(0,\xi)=\widehat{u}_{0}(\xi)$ and $\widehat{u}_{t}(0,\xi)=\widehat{u}_{1}(\xi)$. Here $\widehat{f}$, $\widehat{u}$ denote the Fourier transform of $f$ and $u$ respectively, where $f(t,x):=-a(x)u(t,x)-b(x)u_{t}(t,x)$. Treating $\widehat{f}(t,\xi)$ as a source term and using Duhamel's principle (Proposition \ref{Prop Duhamel} with $\lambda \equiv0$) to solve \eqref{Equation Fourier transf}, we derive the following representation of the solution,
    \begin{equation}\label{Representation of Fourier sol}
        \widehat{u}(t,\xi)=\cos (t\vert\xi\vert^{s}) \widehat{u}_{0}(\xi) + \frac{\sin (t\vert\xi\vert^{s})}{\vert\xi\vert^{s}} \widehat{u}_{1}(\xi) + \int_{0}^{t}\frac{\sin ((t-\tau)\vert\xi\vert^{s})}{\vert\xi\vert^{s}} \widehat{f}(\tau,\xi) \d\tau.
    \end{equation}
    Taking the $L^2$ norm in \eqref{Representation of Fourier sol} and using the estimates:
    \begin{enumerate}
        \item[1.] $\vert \cos (t\vert\xi\vert^{s})\vert \leq 1$, for $t\in \left[0,T\right]$ and $\xi\in \mathbb{R}^{d}$,
        \item[2.] $\vert \sin (t\vert\xi\vert^{s})\vert \leq 1$, for large frequencies and $t\in \left[0,T\right]$ and
        \item[3.] $\vert \sin (t\vert\xi\vert^{s})\vert \leq t\vert\xi\vert^{s} \leq T\vert\xi\vert^{s}$, for small frequencies and $t\in \left[0,T\right]$,
    \end{enumerate}
    leads to
    \begin{equation*}
        \Vert \widehat{u}(t,\cdot)\Vert_{L^2}^{2} \lesssim \Vert \widehat{u}_{0}\Vert_{L^2}^{2} + \Vert \widehat{u}_{1}\Vert_{L^2}^{2} + \int_{0}^{t}\Vert \widehat{f}(\tau,\cdot)\Vert_{L^2}^{2}\d \tau,
    \end{equation*}
    and by using the Parseval-Plancherel identity, we get
    \begin{equation}\label{Estimate1 u}
        \Vert u(t,\cdot)\Vert_{L^2}^{2} \lesssim \Vert u_{0}\Vert_{L^2}^{2} + \Vert u_{1}\Vert_{L^2}^{2} + \int_{0}^{t}\Vert f(\tau,\cdot)\Vert_{L^2}^{2}\d \tau,
    \end{equation}
    for all $t\in [0,T]$.
    To estimate $\Vert f(\tau,\cdot)\Vert_{L^2}$, the last term in the above inequality, we use the triangle inequality and the estimates
    \begin{align}
        \Vert a(\cdot)u(\tau,\cdot)\Vert_{L^2} & \leq \Vert a\Vert_{L^{\infty}}^{\frac{1}{2}} \Vert a^{\frac{1}{2}}(\cdot)u(\tau,\cdot)\Vert_{L^2}\\
        & \lesssim \Vert a\Vert_{L^{\infty}}^{\frac{1}{2}} \big(1+\Vert a\Vert_{L^{\infty}}\big)^{\frac{1}{2}}\big[\Vert u_0\Vert_{H^s} + \Vert u_{1}\Vert_{L^2}\big] \nonumber\\
        & \lesssim \big(1+\Vert a\Vert_{L^{\infty}}\big)\big[\Vert u_0\Vert_{H^s} + \Vert u_{1}\Vert_{L^2}\big], \nonumber
    \end{align}
    resulting from \eqref{Energy1}, and similarly
    \begin{align}
        \Vert b(\cdot)u_{t}(\tau,\cdot)\Vert_{L^2} & \leq \Vert b\Vert_{L^{\infty}} \Vert u_{t}(\tau,\cdot)\Vert_{L^2}\\
        & \lesssim \Vert b\Vert_{L^{\infty}}\big(1+\Vert a\Vert_{L^{\infty}}\big)\big[\Vert u_0\Vert_{H^s} + \Vert u_{1}\Vert_{L^2}\big], \nonumber
    \end{align}
    resulting from \eqref{Energy2}, to get
    \begin{equation}\label{Estimate f}
        \Vert f(\tau,\cdot)\Vert_{L^2} \lesssim \big(1+\Vert a\Vert_{L^{\infty}}\big)\big(1+\Vert b\Vert_{L^{\infty}}\big)\big[\Vert u_0\Vert_{H^s} + \Vert u_{1}\Vert_{L^2}\big].
    \end{equation}
    The desired estimate for $u$ follows by substituting \eqref{Estimate f} into \eqref{Estimate1 u}, finishing the proof.        
\end{proof}

\begin{lem}\label{Lemma2}
    Let $d>2s$. Assume that $a\in L^{\frac{d}{s}}(\mathbb{R}^d) \cap L^{\frac{d}{2s}}(\mathbb{R}^d)$ and $b\in L^{\frac{d}{s}}(\mathbb{R}^d)$ be non-negative. If $u_0\in H^{2s}(\mathbb{R}^d)$ and $u_1\in H^{s}(\mathbb{R}^d)$, then, there is a unique solution $u\in C([0,T]; H^{2s}(\mathbb{R}^d))\cap C^{1}([0,T]; H^{s}(\mathbb{R}^d))$ to \eqref{Equation intro} and it satisfies the estimate
    \begin{equation}\label{Energy estimate1}
         \Vert u(t,\cdot)\Vert_2 \lesssim \Big(1+\Vert a\Vert_{L^{\frac{d}{s}}}\Big)\Big(1+\Vert a\Vert_{L^{\frac{d}{2s}}}\Big)\Big(1+\Vert b\Vert_{L^{\frac{d}{s}}}\Big)^2\bigg[\Vert u_0\Vert_{H^{2s}} + \Vert u_{1}\Vert_{H^s}\bigg],
    \end{equation}
     uniformly in $t\in [0,T]$.
\end{lem}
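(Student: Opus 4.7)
The plan is to build on the proof of Lemma~\ref{Lemma1} and upgrade it in two directions. First, the $L^\infty$ bounds on $a,b$ are replaced by the integrability conditions $L^{d/s}\cap L^{d/(2s)}$ through H\"older (Proposition~\ref{Holder inequality}) combined with the Sobolev embedding \eqref{Sobolev estimate}, noting that $s$-regularity in the argument exactly matches the gain $L^{2d/(d-2s)}$ needed to absorb a factor $a$ or $b$ into the $L^2$ norm. Second, one extra order of regularity on $u$ is produced by applying the same energy method also to $v:=u_{t}$, which is legitimate because $a$ and $b$ do not depend on~$t$.

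Step~1 (energy for $u$). The identity \eqref{Energy functional1} applies unchanged, so $E_{u}(t)\le E_{u}(0)$ with $E_{u}=\Vert u_{t}\Vert_{L^{2}}^{2}+\Vert(-\Delta)^{s/2}u\Vert_{L^{2}}^{2}+\Vert a^{1/2}u\Vert_{L^{2}}^{2}$. The only place where Lemma~\ref{Lemma1} used $a\in L^{\infty}$ was to bound $\Vert a^{1/2}u_{0}\Vert_{L^{2}}^{2}$; here I use H\"older with exponents $d/(2s)$ and $d/(d-2s)$ together with \eqref{Sobolev estimate} to get $\Vert a^{1/2}u_{0}\Vert_{L^{2}}^{2}\lesssim \Vert a\Vert_{L^{d/(2s)}}\Vert u_{0}\Vert_{H^{s}}^{2}$. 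This already controls $\Vert u_{t}(t,\cdot)\Vert_{L^{2}}$ and $\Vert(-\Delta)^{s/2}u(t,\cdot)\Vert_{L^{2}}$ by the right-hand side of \eqref{Energy estimate1}.

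Step~2 (energy for $v=u_{t}$). Differentiating \eqref{Equation intro} in $t$ yields the same equation for $v$, with data $v(0,\cdot)=u_{1}$ and $v_{t}(0,\cdot)=-(-\Delta)^{s}u_{0}-au_{0}-bu_{1}$. Repeating Step~1 for $v$ gives $E_{v}(t)\le E_{v}(0)$; the triangle inequality together with the H\"older--Sobolev argument, this time with exponents $d/s$ and $2d/(d-2s)$, gives $\Vert au_{0}\Vert_{L^{2}}\lesssim\Vert a\Vert_{L^{d/s}}\Vert u_{0}\Vert_{H^{s}}$ and $\Vert bu_{1}\Vert_{L^{2}}\lesssim\Vert b\Vert_{L^{d/s}}\Vert u_{1}\Vert_{H^{s}}$, while $\Vert(-\Delta)^{s}u_{0}\Vert_{L^{2}}\le\Vert u_{0}\Vert_{H^{2s}}$. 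The quadratic potential term $\Vert a^{1/2}u_{1}\Vert_{L^{2}}^{2}$ is treated exactly as in Step~1. This bounds $\Vert u_{tt}(t,\cdot)\Vert_{L^{2}}$ and, decisively, $\Vert(-\Delta)^{s/2}u_{t}(t,\cdot)\Vert_{L^{2}}$; in other words, it promotes $u_{t}$ to $H^{s}$.

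Step~3 (closing). Rewriting the equation as $(-\Delta)^{s}u=-u_{tt}-au-bu_{t}$, one has
\begin{equation*}
\Vert(-\Delta)^{s}u(t,\cdot)\Vert_{L^{2}}\le\Vert u_{tt}(t,\cdot)\Vert_{L^{2}}+\Vert a(\cdot)u(t,\cdot)\Vert_{L^{2}}+\Vert b(\cdot)u_{t}(t,\cdot)\Vert_{L^{2}},
\end{equation*}
where the first term is provided by Step~2, the second by $\Vert au\Vert_{L^{2}}\lesssim\Vert a\Vert_{L^{d/s}}\Vert(-\Delta)^{s/2}u\Vert_{L^{2}}$ with Step~1, and the third by $\Vert bu_{t}\Vert_{L^{2}}\lesssim\Vert b\Vert_{L^{d/s}}\Vert(-\Delta)^{s/2}u_{t}\Vert_{L^{2}}$, which is exactly what Step~2 made available. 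The remaining $\Vert u(t,\cdot)\Vert_{L^{2}}$ is recovered verbatim as in Lemma~\ref{Lemma1} from the Fourier--Duhamel representation \eqref{Representation of Fourier sol}, since the forcing $f=-au-bu_{t}$ is now controlled in $L^{2}$ uniformly in $t\in[0,T]$ by the same estimates. The main obstacle is precisely the term $bu_{t}$: the $L^{\infty}$ argument of Lemma~\ref{Lemma1} is unavailable here, and the only route through H\"older--Sobolev requires $u_{t}\in H^{s}$, which is precisely the regularity that Step~2 is designed to deliver. Tracking the resulting powers of $\Vert a\Vert_{L^{d/s}}$, $\Vert a\Vert_{L^{d/(2s)}}$ and $\Vert b\Vert_{L^{d/s}}$ across the three steps (the square on $(1+\Vert b\Vert_{L^{d/s}})$ appears because $\Vert b\Vert_{L^{d/s}}$ is used once in Step~2 to bound $\Vert bu_{1}\Vert_{L^{2}}$ and once again in Step~3 to bound $\Vert bu_{t}\Vert_{L^{2}}$) reproduces the product structure on the right-hand side of \eqref{Energy estimate1}.
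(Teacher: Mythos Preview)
Your proposal is correct and follows essentially the same route as the paper's proof: the energy identity for $u$ combined with H\"older--Sobolev to handle $\Vert a^{1/2}u_0\Vert_{L^2}$, then the same energy argument applied to $v=u_t$ (with the data you wrote) to obtain $\Vert(-\Delta)^{s/2}u_t\Vert_{L^2}$ and $\Vert u_{tt}\Vert_{L^2}$, and finally the equation rewritten as $(-\Delta)^s u=-u_{tt}-au-bu_t$ together with the Fourier--Duhamel representation for $\Vert u\Vert_{L^2}$. The only difference is cosmetic ordering: the paper first sets up the Duhamel estimate for $\Vert u\Vert_{L^2}$, discovers it needs $\Vert(-\Delta)^{s/2}u_t\Vert_{L^2}$, and then turns to the $u_t$ equation, whereas you do the $u_t$ energy step up front; the estimates and the tracking of the constants are identical.
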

\begin{proof}
    Proceeding as in the proof of Lemma \ref{Lemma1}, we get
    \begin{equation}\label{Energy functional1.1}
        \partial_{t}E(t) = -2\Vert b^{\frac{1}{2}}(\cdot)u_{t}(t,\cdot)\Vert_{L^2}^2\leq 0,
    \end{equation}
    for the energy function of the system defined by
    \begin{equation}\label{Energy functional1.2}
        E(t)= \Vert u_{t}(t,\cdot)\Vert_{L^2}^2 + \Vert (-\Delta)^{\frac{s}{2}}u(t,\cdot)\Vert_{L^2}^2 + \Vert a^{\frac{1}{2}}(\cdot)u(t,\cdot)\Vert_{L^2}^2,
    \end{equation}
    which implies the decay of the energy over $t$. That is
    \begin{equation}\label{Energy decay}
        E(t)\leq \Vert u_{1}\Vert_{L^2}^2 + \Vert (-\Delta)^{\frac{s}{2}}u_{0}\Vert_{L^2}^2 + \Vert a^{\frac{1}{2}}(\cdot)u_{0}\Vert_{L^2}^2,
    \end{equation}
    for all $t\in[0,T]$. Using H\"older's inequality (see Proposition \ref{Holder inequality}) for the last term in \eqref{Energy decay} together with $\Vert a^{\frac{1}{2}}\Vert_{L^p}^2 = \Vert a\Vert_{L^{\frac{p}{2}}}$, gives
    \begin{equation}
        \Vert a^{\frac{1}{2}}(\cdot)u_{0}(\cdot)\Vert_{L^2}^2 \leq \Vert a\Vert_{L^{\frac{p}{2}}}\Vert u_0\Vert_{L^q}^2,
    \end{equation}
    for $1<p,q<\infty$, satisfying $\frac{1}{p}+\frac{1}{q}=\frac{1}{2}$. Now, if we choose $q=\frac{2d}{d-2s}$ and consequently $p=\frac{d}{s}$, it follows from Proposition \ref{Prop. Sobolev estimate} that
    \begin{equation}
        \Vert u_0\Vert_{L^q} \lesssim \Vert (-\Delta)^{\frac{s}{2}}u_{0}(\cdot)\Vert_{L^2}\leq \Vert u_{0}\Vert_{H^s},
    \end{equation}
    and thus
    \begin{equation}\label{Estimate au_0}
        \Vert a^{\frac{1}{2}}(\cdot)u_{0}(\cdot)\Vert_{L^2}^2 \lesssim \Vert a\Vert_{L^{\frac{d}{2s}}}\Vert u_{0}\Vert_{H^s}^2.
    \end{equation}
    Substituting \eqref{Estimate au_0} in \eqref{Energy decay}, we get the estimates
    \begin{equation}\label{Energy3}
        \bigg\{\Vert u_{t}(t,\cdot)\Vert_{L^2}^2, \Vert (-\Delta)^{\frac{s}{2}}u(t,\cdot)\Vert_{L^2}^2, \Vert a^{\frac{1}{2}}(\cdot)u(t,\cdot)\Vert_{L^2}^2\bigg\} \lesssim \big(1+\Vert a\Vert_{L^{\frac{d}{2s}}}\big)\big[\Vert u_0\Vert_{H^s} + \Vert u_{1}\Vert_{L^2}\big]^2,
    \end{equation}
    uniformly in $t\in [0,T]$. To prove the estimate for the solution $u$, we argue as in the proof of Lemma \ref{Lemma1} to get
    \begin{equation}\label{Estimate2 u}
        \Vert u(t,\cdot)\Vert_{L^2}^{2} \lesssim \Vert u_{0}\Vert_{L^2}^{2} + \Vert u_{1}\Vert_{L^2}^{2} + \int_{0}^{t}\Vert f(\tau,\cdot)\Vert_{L^2}^{2}\d\tau,
    \end{equation}
    for all $t\in [0,T]$, with $f(t,x):=-a(x)u(t,x)-b(x)u_{t}(t,x)$. In order to estimate $\Vert f(\tau,\cdot)\Vert_{L^2}$, we use the triangle inequality to get
    \begin{equation}\label{Estimate2 f}
        \Vert f(t,\cdot)\Vert_{L^2}\leq \Vert a(\cdot)u(t,\cdot)\Vert_{L^2} + \Vert b(\cdot)u_{t}(t,\cdot)\Vert_{L^2}.
    \end{equation}
    To estimate the first term in \eqref{Estimate2 f}, we first use H\"older's inequality together with $\Vert a^{2}\Vert_{L^{\frac{p}{2}}} = \Vert a\Vert_{L^p}^2$, to get
    \begin{equation}
        \Vert a(\cdot)u(t,\cdot)\Vert_{L^2} \leq \Vert a\Vert_{L^p} \Vert u(t,\cdot)\Vert_{L^q},
    \end{equation}
    for $1<p,q<\infty$, satisfying $\frac{1}{p}+\frac{1}{q}=\frac{1}{2}$, and we choose $q=\frac{2d}{d-2s}$ and consequently $p=\frac{d}{s}$, in order to get (from Proposition \ref{Prop. Sobolev estimate})
    \begin{equation}
        \Vert u(t,\cdot)\Vert_{L^q} \lesssim \Vert (-\Delta)^{\frac{s}{2}}u(t,\cdot)\Vert_{L^2},
    \end{equation}
    and thus
    \begin{equation}\label{Estimate au}
        \Vert a(\cdot)u(t,\cdot)\Vert_{L^2} \lesssim \Vert a\Vert_{L^{\frac{d}{s}}} \Vert (-\Delta)^{\frac{s}{2}}u(t,\cdot)\Vert_{L^2},
    \end{equation}
    for all $t\in[0,T]$. Using the estimate \eqref{Energy3}, we arrive at
    \begin{align}\label{Estimate2 au}
        \Vert a(\cdot)u(t,\cdot)\Vert_{L^2} & \lesssim \Vert a\Vert_{L^{\frac{d}{s}}} \big(1+\Vert a\Vert_{L^{\frac{d}{2s}}}\big)^{\frac{1}{2}}\big[\Vert u_0\Vert_{H^s} + \Vert u_{1}\Vert_{L^2}\big]\\
        & \nonumber \lesssim \Vert a\Vert_{L^{\frac{d}{s}}} \big(1+\Vert a\Vert_{L^{\frac{d}{2s}}}\big)\big[\Vert u_0\Vert_{H^s} + \Vert u_{1}\Vert_{L^2}\big],
    \end{align}
    where we used $\lambda^{\frac{1}{2}}\leq \lambda$ for $\lambda \geq 1$. For the second term in \eqref{Estimate2 f}, we argue as above, to get
    \begin{equation}\label{Estimate bu_t}
        \Vert b(\cdot)u_t(t,\cdot)\Vert_{L^2} \lesssim \Vert b\Vert_{L^{\frac{d}{s}}} \Vert (-\Delta)^{\frac{s}{2}}u_t(t,\cdot)\Vert_{L^2},
    \end{equation}
    for all $t\in[0,T]$. We need now to estimate $\Vert (-\Delta)^{\frac{s}{2}}u_t(t,\cdot)\Vert_{L^2}$. For this, we note that if $u$ solves the Cauchy problem
    \begin{equation}
        \bigg\lbrace
        \begin{array}{l}
        u_{tt}(t,x) + (-\Delta)^{s}u(t,x) + a(x)u(t,x) + b(x)u_{t}(t,x)=0, \,\, (t,x)\in [0,T]\times\mathbb{R}^d,\\
        u(0,x)=u_{0}(x),\quad u_{t}(0,x)=u_{1}(x), \quad x\in\mathbb{R}^d,
        \end{array}
    \end{equation}
    then $u_t$ solves
    \begin{equation}\label{Equation in u_t}
        \bigg\lbrace
        \begin{array}{l}
        (u_t)_{tt}(t,x) + (-\Delta)^{s}u_t(t,x) + a(x)u_t(t,x) + b(x)(u_t)_{t}(t,x)=0, \,\, (t,x)\in [0,T]\times\mathbb{R}^d,\\
        u_t(0,x)=u_{1}(x),\quad u_{tt}(0,x)=-(-\Delta)^{s}u_0(x) - a(x)u_0(x) - b(x)u_1(x), \quad x\in\mathbb{R}^d.
        \end{array}
    \end{equation}
    Thanks to \eqref{Estimate au} and \eqref{Estimate bu_t} one has
    \begin{equation}
        \Vert a(\cdot)u_0(\cdot)\Vert_{L^2} \lesssim \Vert a\Vert_{L^{\frac{d}{s}}} \Vert u_0\Vert_{H^s},\quad \Vert b(\cdot)u_1(\cdot)\Vert_{L^2} \lesssim \Vert b\Vert_{L^{\frac{d}{s}}} \Vert u_1\Vert_{H^s}.
    \end{equation}
    The estimate for $\Vert (-\Delta)^{\frac{s}{2}}u_t(t,\cdot)\Vert_{L^2}$ follows by using \eqref{Energy3} applied to the problem \eqref{Equation in u_t}, to get
    \begin{align}\label{Estimate Delta u_t}
        \Vert (-\Delta)^{\frac{s}{2}}u_t(t,\cdot)\Vert_{L^2} & \lesssim \big(1+\Vert a\Vert_{L^{\frac{d}{2s}}}\big)^{\frac{1}{2}}\big[\Vert u_1\Vert_{H^s} + \Vert u_{tt}(0,\cdot)\Vert_{L^2}\big]\\
        & \nonumber \lesssim \big(1+\Vert a\Vert_{L^{\frac{d}{2s}}}\big)^{\frac{1}{2}}\big[\Vert u_1\Vert_{H^s} + \Vert u_0\Vert_{H^{2s}} + \Vert a\Vert_{L^{\frac{d}{s}}} \Vert u_0\Vert_{H^s} + \Vert b\Vert_{L^{\frac{d}{s}}} \Vert u_1\Vert_{H^s}\big]\\
        & \nonumber \lesssim \big(1+\Vert a\Vert_{L^{\frac{d}{2s}}}\big)\big(1+\Vert a\Vert_{L^{\frac{d}{s}}}\big)\big(1+\Vert b\Vert_{L^{\frac{d}{s}}}\big)\big[\Vert u_0\Vert_{H^{2s}} + \Vert u_1\Vert_{H^s}\big].
    \end{align}
    By substituting \eqref{Estimate Delta u_t} in \eqref{Estimate bu_t} we get
    \begin{align}\label{Estimate2 bu_t}
        \Vert b(\cdot)u_t(t,\cdot)\Vert_{L^2} & \lesssim \Vert b\Vert_{L^{\frac{d}{s}}}\big(1+\Vert a\Vert_{L^{\frac{d}{2s}}}\big)\big(1+\Vert a\Vert_{L^{\frac{d}{s}}}\big)\big(1+\Vert b\Vert_{L^{\frac{d}{s}}}\big)\big[\Vert u_0\Vert_{H^{2s}} + \Vert u_1\Vert_{H^s}\big]\nonumber\\
        & \lesssim \big(1+\Vert a\Vert_{L^{\frac{d}{2s}}}\big)\big(1+\Vert a\Vert_{L^{\frac{d}{s}}}\big)\big(1+\Vert b\Vert_{L^{\frac{d}{s}}}\big)^2\big[\Vert u_0\Vert_{H^{2s}} + \Vert u_1\Vert_{H^s}\big],
    \end{align}
    and the estimate for $\Vert f(t,\cdot)\Vert_{L^2}$ follows from  \eqref{Estimate f} and \eqref{Estimate2 au} with \eqref{Estimate2 bu_t}, yielding
    \begin{equation}\label{Estimate3 f}
        \Vert f(t,\cdot)\Vert_{L^2} \lesssim \big(1+\Vert a\Vert_{L^{\frac{d}{2s}}}\big)\big(1+\Vert a\Vert_{L^{\frac{d}{s}}}\big)\big(1+\Vert b\Vert_{L^{\frac{d}{s}}}\big)^2\big[\Vert u_0\Vert_{H^{2s}} + \Vert u_1\Vert_{H^s}\big].
    \end{equation}
    Combining these estimates, we get the estimate for the solution $u$. Now, to estimate $\Vert(-\Delta)^s u\Vert_{L^2}$, we need first to estimate $u_{tt}$. Reasoning as in \eqref{Estimate Delta u_t}, the first estimate for $u_t$ in \eqref{Energy3}, when applied to $u_t$ the solution to $\eqref{Equation in u_t}$ instead of $u$, gives
    \begin{align}\label{Estimate u_tt}
        \Vert u_{tt}(t,\cdot)\Vert_{L^2} & \lesssim \big(1+\Vert a\Vert_{L^{\frac{d}{2s}}}\big)^{\frac{1}{2}}\big[\Vert u_1\Vert_{H^s} + \Vert u_{tt}(0,\cdot)\Vert_{L^2}\big]\\
        & \nonumber \lesssim \big(1+\Vert a\Vert_{L^{\frac{d}{2s}}}\big)^{\frac{1}{2}}\big[\Vert u_1\Vert_{H^s} + \Vert u_0\Vert_{H^{2s}} + \Vert a\Vert_{L^{\frac{d}{s}}} \Vert u_0\Vert_{H^s} + \Vert b\Vert_{L^{\frac{d}{s}}} \Vert u_1\Vert_{H^s}\big]\\
        & \nonumber \lesssim \big(1+\Vert a\Vert_{L^{\frac{d}{2s}}}\big)\big(1+\Vert a\Vert_{L^{\frac{d}{s}}}\big)\big(1+\Vert b\Vert_{L^{\frac{d}{s}}}\big)\big[\Vert u_0\Vert_{H^{2s}} + \Vert u_1\Vert_{H^s}\big].
    \end{align}
    The estimate for $\Vert(-\Delta)^s u\Vert_{L^2}$ follows by taking the $L^2$ norm in the equality
    \begin{equation*}
        (-\Delta)^{s}u(t,x)=-u_{tt}(t,x) - a(x)u(t,x) - b(x)u_{t}(t,x),
    \end{equation*}
    and using the triangle inequality in the right hand side and by taking into consideration the so far obtained estimates \eqref{Estimate2 au}, \eqref{Estimate2 bu_t} and \eqref{Estimate u_tt}. This completes the proof.
\end{proof}

\section{Very weak well-posedness}
Here and in the sequel, we consider the case when the equation coefficients $a$, $b$ and the Cauchy data $u_0$ and $u_1$ are irregular (functions) and prove that the Cauchy problem
\begin{equation}\label{Equation sing}
    \bigg\lbrace
    \begin{array}{l}
    u_{tt}(t,x) + (-\Delta)^{s}u(t,x) + a(x)u(t,x) + b(x)u_{t}(t,x)=0, \,\, (t,x)\in [0,T]\times\mathbb{R}^d,\\
    u(0,x)=u_{0}(x),\quad u_{t}(0,x)=u_{1}(x), \quad x\in\mathbb{R}^d,
    \end{array}
\end{equation}
has a unique very weak solution. We have in mind ``functions" having $\delta$ or $\delta^2$-like behaviours. We note that we understand a multiplication of distributions as multiplication of approximating families, in particular the multiplication of their representatives in Colombeau algebra.

\subsection{Existence of very weak solutions}
In order to prove existence of very weak solutions to \eqref{Equation sing}, we need the following definitions.

\begin{defn}[\textbf{Friedrichs mollifier}]\label{Defn Friedrichs mollifier}
A function $\psi\in C_0^{\infty}(\mathbb{R}^d)$ is said to be a Friedrichs-mollifier if $\psi$ is non-negative and $\int_{\mathbb{R}^d}\psi(x)\d x=1$.
\end{defn}

\begin{exam}
An example of a Friedrichs-mollifier is given by:
\begin{equation*}
    \psi(x)=\left\lbrace
    \begin{array}{l}
    \alpha \e^{-\frac{1}{1-\vert x^2\vert}}\quad \vert x\vert <1,\\
    0\qquad\qquad \vert x\vert \geq 1,
    \end{array}
    \right.
\end{equation*}
where the constant $\alpha$ is choosed in such way that $\int_{\mathbb{R}^d}\psi(x)\d x=1$.
\end{exam}
Assume now $\psi$ as defined above a Friedrichs mollifier.

\begin{defn}[\textbf{Mollifying net}]\label{Defn Mollifying net}
For $\varepsilon\in(0,1]$, and $x\in\mathbb{R}^d$, a net of functions $\left(\psi_\varepsilon\right)_{\varepsilon\in(0,1]}$ is called a mollifying net if 
\begin{equation*}
    \psi_\varepsilon (x)=\omega(\varepsilon)^{-1}\psi\left(x/\omega(\varepsilon)\right),
\end{equation*}
where $\omega(\varepsilon)$ is a positive function converging to $0$ as $\varepsilon\rightarrow 0$ and $\psi$ is a Friedrichs-mollifier. In particular, if we take $\omega(\varepsilon)=\varepsilon$, then, we get
\begin{equation*}
    \psi_\varepsilon (x)=\varepsilon^{-1}\psi\left(x/\varepsilon\right).
\end{equation*}
\end{defn}

Given a function (distribution) $f$, regularising $f$ by convolution with a mollifying net $\left(\psi_\varepsilon\right)_{\varepsilon\in(0,1]}$, yields a net of smooth functions, namely
\begin{equation}
    (f_\varepsilon)_{\varepsilon\in(0,1]}=(f\ast \psi_\varepsilon)_{\varepsilon\in(0,1]}.\label{regularisation by convolution}
\end{equation}

\begin{rem}\label{Remark approx vs regularis}
    The term ``regularisation" of a function or distribution $f$ when used, will be viewed as a net of smooth functions $(f_{\varepsilon})_{\varepsilon\in (0,1]}$ arising from convolution with a mollifying net (as in Definition \ref{Defn Mollifying net}). However, the term ``approximation" is more general in the sense that approximations are not necessarily arising from convolution with molliffying nets. For instance, if we consider $(f_{\varepsilon})_{\varepsilon\in (0,1]}$ a regularisation of $f$, then the net of functions $(\Tilde{f}_{\varepsilon})_{\varepsilon\in (0,1]}$ defined by
    \begin{equation}\label{Approximation}         \Tilde{f}_{\varepsilon}=f_{\varepsilon}    + \e^{-\frac{1}{\varepsilon}},
    \end{equation}
    is an approximation of $f$ but not resulting from regularisation.
\end{rem}
Now, for a function (distribution) $f$, let $(f_\varepsilon)_{\varepsilon\in(0,1]}$ be a net of smooth functions approximating $f$, not necessarily coming from regularisation.
\begin{defn}[\textbf{Moderateness}]\label{Defn moderateness}
Let $X$ be a normed space of functions on $\mathbb{R}^d$ endowed with the norm $\Vert \cdot\Vert_X$.
\begin{enumerate}
    \item[\textsc{1.}] A net of functions $(f_\varepsilon)_{\varepsilon\in(0,1]}$ from $X$ is said to be $X$-moderate, if there exist $N\in\mathbb{N}_0$ such that
\begin{equation}\label{Defn moderateness1}
    \Vert f_\varepsilon\Vert_X \lesssim \omega(\varepsilon)^{-N}.
\end{equation}
    \item[\textsc{2.}] For $T>0$. A net of functions $(u_\varepsilon(\cdot,\cdot))_{\varepsilon\in(0,1]}$ from $C\big([0,T];H^{s}(\mathbb{R}^d)\big)\cap\newline C^1\big([0,T]; L^{2}(\mathbb{R}^d)\big)$ is said to be $C\big([0,T];H^{s}(\mathbb{R}^d)\big)\cap C^1\big([0,T];L^{2}(\mathbb{R}^d)\big)$-mode-rate, if there exist $N\in\mathbb{N}_0$ such that
    \begin{equation}\label{Defn moderateness2}
        \sup_{t\in [0,T]}\Vert u_\varepsilon(t,\cdot)\Vert_1 \lesssim \omega(\varepsilon)^{-N}.
    \end{equation}
    \item[\textsc{3.}] For $T>0$. A net of functions $(u_\varepsilon(\cdot,\cdot))_{\varepsilon\in(0,1]}$ from $C\big([0,T];H^{2s}(\mathbb{R}^d)\big)\cap C^1\big([0,T];H^{s}(\mathbb{R}^d)\big)$ is said to be $C\big([0,T];H^{2s}(\mathbb{R}^d)\big)\cap C^1\big([0,T];H^{s}(\mathbb{R}^d)\big)$-mode-rate, if there exist $N\in\mathbb{N}_0$ such that
    \begin{equation}\label{Defn moderateness3}
        \sup_{t\in [0,T]}\Vert u_\varepsilon(t,\cdot)\Vert_2 \lesssim \omega(\varepsilon)^{-N}.
    \end{equation}
\end{enumerate}
For the second and the third definitions of moderateness, we will shortly write $C_1$-moderate and $C_2$-moderate.
\end{defn}
The following proposition states that moderateness as defined above is a natural assumption for compactly supported distributions. Indeed, we have:
\begin{prop}
    Let $f\in \mathcal{E}^{'}(\mathbb{R}^d)$ and let $(f_\varepsilon)_{\varepsilon\in(0,1]}$ be regularisation of $f$ obtained via convolution with a mollifying net $\left(\psi_\varepsilon\right)_{\varepsilon\in(0,1]}$ (see Definition \ref{Defn Mollifying net}). Then, the net $(f_\varepsilon)_{\varepsilon\in(0,1]}$ is $L^{p}(\mathbb{R}^d)$-moderate for any $1\leq p\leq \infty$.
\end{prop}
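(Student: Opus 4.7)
The plan is to exploit two classical facts about compactly supported distributions: they have finite order, and convolution with a mollifier produces a smooth function whose support is essentially controlled by the support of $f$. With these, the required $L^p$ bound is almost immediate from the scaling of $\psi_\varepsilon$.

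First I would use that any $f \in \mathcal{E}'(\mathbb{R}^d)$ has some finite order $k \in \mathbb{N}_0$. Fixing a relatively compact open neighborhood $U$ of $\operatorname{supp}(f)$, there exists $C>0$ such that
\begin{equation*}
    |\langle f, \varphi \rangle| \leq C \sum_{|\alpha|\leq k} \sup_{y\in \overline{U}} |\partial^{\alpha}\varphi(y)|,\qquad \varphi\in C^\infty(\mathbb{R}^d).
\end{equation*}
Applying this to $\varphi(y) = \psi_\varepsilon(x-y)$, which is smooth in $y$, and using $f_\varepsilon(x) = \langle f_y, \psi_\varepsilon(x-y)\rangle$, I would obtain the pointwise bound
\begin{equation*}
    |f_\varepsilon(x)| \leq C \sum_{|\alpha|\leq k} \|\partial^{\alpha}\psi_\varepsilon\|_{L^\infty},
\end{equation*}
uniformly in $x$.

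Next I would compute the scaling of $\psi_\varepsilon$. Since $\psi_\varepsilon$ is obtained from a fixed $\psi \in C_0^\infty(\mathbb{R}^d)$ by rescaling through $\omega(\varepsilon)$, each derivative costs one factor of $\omega(\varepsilon)^{-1}$, so that $\|\partial^\alpha \psi_\varepsilon\|_{L^\infty} \lesssim \omega(\varepsilon)^{-M-|\alpha|}$ for some fixed $M$ depending only on $d$ and the normalisation. Choosing $N := M + k$, this gives $\|f_\varepsilon\|_{L^\infty} \lesssim \omega(\varepsilon)^{-N}$, settling the case $p=\infty$.

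Finally, for $1 \leq p <\infty$ I would use the support property $\operatorname{supp}(f_\varepsilon) \subset \operatorname{supp}(f) + \operatorname{supp}(\psi_\varepsilon)$. Since $\operatorname{supp}(\psi_\varepsilon)$ shrinks to $\{0\}$ as $\varepsilon\to 0$, there exists a fixed compact set $K \subset \mathbb{R}^d$ containing $\operatorname{supp}(f_\varepsilon)$ for all $\varepsilon\in(0,1]$. Consequently
\begin{equation*}
    \|f_\varepsilon\|_{L^p} \leq |K|^{1/p}\, \|f_\varepsilon\|_{L^\infty} \lesssim \omega(\varepsilon)^{-N},
\end{equation*}
with the same $N$ as before, completing the proof. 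There is really no hard step here: the only substantive ingredient is the finite-order structure of $\mathcal{E}'(\mathbb{R}^d)$, and everything else is an accounting of how differentiation interacts with the scaling $x \mapsto x/\omega(\varepsilon)$.
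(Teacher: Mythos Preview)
Your proof is correct, but it proceeds differently from the paper's. The paper invokes the structure theorem for compactly supported distributions to write $f=\sum_{|\alpha|\le n}\partial^\alpha f_\alpha$ with $f_\alpha\in C_c(\mathbb{R}^d)$, transfers the derivatives onto the mollifier, and then applies Young's inequality to each term $f_\alpha * \partial^\alpha\psi_\varepsilon$ to obtain the $L^p$ bound directly for every $p$. You instead use only the seminorm estimate expressing finite order, derive a uniform pointwise ($L^\infty$) bound on $f_\varepsilon$, and then upgrade to $L^p$ via the uniform compactness of $\supp(f_\varepsilon)$. Your route is slightly more elementary in that it avoids the representation form of the structure theorem, at the cost of treating $p=\infty$ and $p<\infty$ in two steps; the paper's Young-inequality argument handles all $p$ uniformly without needing to track supports. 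Both arguments rest on the same essential input---the finite order of elements of $\mathcal{E}'(\mathbb{R}^d)$---and yield the same moderateness exponent up to harmless constants.
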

\begin{proof}
    Fix $p\in [1,\infty]$ and let $f\in \mathcal{E}^{'}(\mathbb{R}^d)$. By the structure theorems for distributions (see \cite[Corollary 5.4.1]{FJ98}), there exists $n\in \mathbb{N}$ and compactly supported functions $f_{\alpha}\in C(\mathbb{R}^{d})$ such that
    \begin{equation*}
    T=\sum_{|\alpha| \leq n}\partial^{\alpha}f_{\alpha},
    \end{equation*}
    where $|\alpha|$ is the length of the multi-index $\alpha$. The convolution of $f$ with a mollifying net $\left(\psi_\varepsilon\right)_{\varepsilon\in(0,1]}$ yields
    \begin{equation}\label{Struct thm}
        f\ast\psi_{\varepsilon}=\sum_{\vert \alpha\vert \leq n}\partial^{\alpha}f_{\alpha}\ast\psi_{\varepsilon}=\sum_{\vert \alpha\vert \leq n}f_{\alpha}\ast\partial^{\alpha}\psi_{\varepsilon}=\sum_{\vert \alpha\vert \leq n}\varepsilon^{-d-\vert\alpha\vert}f_{\alpha}\ast\partial^{\alpha}\psi(x/\varepsilon).
    \end{equation}
    Taking the $L^p$ norm in \eqref{Struct thm} gives
    \begin{equation}\label{Struct thm1}
        \Vert f\ast\psi_{\varepsilon}\Vert_{L^p} \leq \sum_{\vert \alpha\vert \leq n}\varepsilon^{-d-\vert\alpha\vert}\Vert f_{\alpha}\ast\partial^{\alpha}\psi(x/\varepsilon)\Vert_{L^p}.
    \end{equation}
    Since $f_{\alpha}$ and $\psi$ are compactly supported then, Young's inequality applies for any $p_1,p_2 \in [1,\infty]$, provided that $\frac{1}{p_1}+\frac{1}{p_2}=\frac{1}{p}$. That is
    \begin{equation*}
        \Vert f_{\alpha}\ast\partial^{\alpha}\psi(x/\varepsilon)\Vert_{L^p} \leq \Vert f_{\alpha}\Vert_{L^{p_1}}\Vert\partial^{\alpha}\psi(x/\varepsilon)\Vert_{L^{p_2}} <\infty.
    \end{equation*}
    It follows from \eqref{Struct thm1} that $(f_\varepsilon)_{\varepsilon\in(0,1]}$ is $L^{p}(\mathbb{R}^d)$-moderate. 
\end{proof}

\begin{exam} \label{example moderateness} Let             $(\psi_\varepsilon)_\varepsilon$ be a mollifying      net such that $\psi_\varepsilon (x) =                 \varepsilon^{-1}\psi(\varepsilon^{-1}x)$. Since 
    $\psi$ is compactly supported, then,
    \begin{itemize}
    \item[(1)] For $f(x)=\delta_{0}(x)$, we have
    $f_{\varepsilon}(x) = \varepsilon^{-1}\psi(\varepsilon^{-1}x)\leq C\varepsilon^{-1}.$
    \item[(2)] For $f(x)=\delta_{0}^{2}(x)$, we can take
    $f_{\varepsilon}(x) = \varepsilon^{-2}\psi^{2}(\varepsilon^{-1}x) \leq C\varepsilon^{-2}.$
    \end{itemize}
\end{exam}

Now, we are ready to introduce the notion of very weak solutions adapted to our problem. Here and in the sequel, we consider $\omega(\varepsilon)=\varepsilon$, in all the above definitions.

\begin{defn}[\textbf{Very weak solution}]\label{Defn1 V.W.S}
    A net of functions $(u_{\varepsilon})_{\varepsilon}\in C([0,T];H^{s}(\mathbb{R}^d))\cap C^1([0,T];L^{2}(\mathbb{R}^d))$ is said to be a very weak solution to the Cauchy problem (\ref{Equation sing}), if there exist
    \begin{itemize}
        \item $L^{\infty}(\mathbb{R}^d)$-moderate approximations $(a_{\varepsilon})_{\varepsilon}$ and $(b_{\varepsilon})_{\varepsilon}$ to $a$ and $b$, with $a_{\varepsilon} \geq 0$ and $b_{\varepsilon} \geq 0$,
        \item $H^{s}(\mathbb{R}^d)$-moderate approximation $(u_{0,\varepsilon})_{\varepsilon}$ to $u_0$,
        \item $L^{2}(\mathbb{R}^d)$-moderate approximation $(u_{1,\varepsilon})_{\varepsilon}$ to $u_1$,
    \end{itemize}
    such that, $(u_{\varepsilon})_{\varepsilon}$ solves the approximating problems
    \begin{equation}\label{Approximating equation}
        \bigg\{
        \begin{array}{l}
        \partial_{t}^2u_{\varepsilon}(t,x) + (-\Delta)^{s}u_{\varepsilon}(t,x) + a_{\varepsilon}(x)u_{\varepsilon}(t,x) + b_{\varepsilon}(x)\partial_{t}u_{\varepsilon}(t,x)=0, \,\, (t,x)\in [0,T]\times\mathbb{R}^d,\\
        u_{\varepsilon}(0,x)=u_{0,\varepsilon}(x),\quad \partial_{t}u_{\varepsilon}(0,x)=u_{1,\varepsilon}(x), \quad x\in\mathbb{R}^d,
        \end{array}
    \end{equation}
    for all $\varepsilon\in (0,1]$, and is $C_1$-moderate.
\end{defn}

We have also the following alternative definition of a very weak solution to \eqref{Equation sing}, under the assumptions of Lemma \ref{Lemma2}.

\begin{defn}\label{Defn2 V.W.S}
    Let $d>2s$. A net of functions $(u_{\varepsilon})_{\varepsilon}\in C([0,T];H^{2s}(\mathbb{R}^d))\cap C^1([0,T];H^{s}(\mathbb{R}^d))$ is said to be a very weak solution to the Cauchy problem (\ref{Equation sing}), if there exist
    \begin{itemize}
        \item $(L^{\frac{d}{s}}(\mathbb{R}^d)\cap L^{\frac{d}{2s}}(\mathbb{R}^d))$-moderate approximation $(a_{\varepsilon})_{\varepsilon}$ to $a$, with $a_{\varepsilon} \geq 0$,
        \item $L^{\frac{d}{s}}(\mathbb{R}^d)$-moderate approximation $(b_{\varepsilon})_{\varepsilon}$ to $b$, with $b_{\varepsilon} \geq 0$,
        \item $H^{2s}(\mathbb{R}^d)$-moderate approximation $(u_{0,\varepsilon})_{\varepsilon}$ to $u_0$,
        \item $H^{s}(\mathbb{R}^d)$-moderate approximation $(u_{1,\varepsilon})_{\varepsilon}$ to $u_1$,
    \end{itemize}
    such that, $(u_{\varepsilon})_{\varepsilon}$ solves the approximating problems (as in Definition \ref{Defn1 V.W.S}) for all $\varepsilon\in (0,1]$, and is $C_2$-moderate.
\end{defn}
Now, under the assumptions in Definition \ref{Defn1 V.W.S} and Definition \ref{Defn2 V.W.S}, the existence of a very weak solution is straightforward.

\begin{thm}\label{Thm1 existence}
    Assume that there exist $\big\{L^{\infty}(\mathbb{R}^d),L^{\infty}(\mathbb{R}^d),H^{s}(\mathbb{R}^d),L^2(\mathbb{R}^d)\big\}$-moderate approximations to $a,b,u_0$ and $u_1$ respectively, with $a_{\varepsilon} \geq 0$ and $b_{\varepsilon} \geq 0$. Then, the Cauchy problem \eqref{Equation sing} has a very weak solution.
\end{thm}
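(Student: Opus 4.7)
The plan is to produce the very weak solution as the family $(u_\varepsilon)_\varepsilon$ of \emph{classical} solutions to the approximating problems \eqref{Approximating equation}, and then verify $C_1$-moderateness directly from the energy estimate of Lemma \ref{Lemma1}. Since the hypotheses give us, for every fixed $\varepsilon \in (0,1]$, non-negative coefficients $a_\varepsilon, b_\varepsilon \in L^\infty(\mathbb{R}^d)$ (indeed smooth) together with Cauchy data $u_{0,\varepsilon} \in H^s(\mathbb{R}^d)$ and $u_{1,\varepsilon} \in L^2(\mathbb{R}^d)$, Lemma \ref{Lemma1} applies at each level $\varepsilon$ and furnishes a unique solution $u_\varepsilon \in C([0,T];H^s(\mathbb{R}^d)) \cap C^1([0,T];L^2(\mathbb{R}^d))$ to \eqref{Approximating equation}.

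The second and only substantive step is to verify the moderateness bound \eqref{Defn moderateness2}. Applying the energy inequality \eqref{Energy estimate} to each $u_\varepsilon$ yields
\begin{equation*}
    \sup_{t\in[0,T]}\Vert u_\varepsilon(t,\cdot)\Vert_1 \lesssim \bigl(1+\Vert a_\varepsilon\Vert_{L^\infty}\bigr)\bigl(1+\Vert b_\varepsilon\Vert_{L^\infty}\bigr)\bigl[\Vert u_{0,\varepsilon}\Vert_{H^s}+\Vert u_{1,\varepsilon}\Vert_{L^2}\bigr].
\end{equation*}
By hypothesis there exist non-negative integers $N_a, N_b, N_0, N_1$ such that each of $\Vert a_\varepsilon\Vert_{L^\infty}$, $\Vert b_\varepsilon\Vert_{L^\infty}$, $\Vert u_{0,\varepsilon}\Vert_{H^s}$, $\Vert u_{1,\varepsilon}\Vert_{L^2}$ is $\lesssim \varepsilon^{-N_a}, \varepsilon^{-N_b}, \varepsilon^{-N_0}, \varepsilon^{-N_1}$ respectively. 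Multiplying these bounds and setting $N := N_a + N_b + \max(N_0, N_1)$ (or any larger integer), we obtain $\sup_{t\in[0,T]} \Vert u_\varepsilon(t,\cdot)\Vert_1 \lesssim \varepsilon^{-N}$, which is exactly the $C_1$-moderateness required by Definition \ref{Defn1 V.W.S}.

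I do not anticipate a genuine obstacle: the whole content of the theorem is the observation that moderateness is preserved by the linear energy estimate of Lemma \ref{Lemma1}, so the proof amounts to solving \eqref{Approximating equation} at each $\varepsilon$ and reading off the resulting polynomial-in-$\varepsilon^{-1}$ bound. The only care needed is to note that the hypothesis $a_\varepsilon, b_\varepsilon \geq 0$ is what permits us to invoke Lemma \ref{Lemma1} (which uses non-negativity to discard the dissipation term in the energy identity \eqref{Energy functional1}), and that Definition \ref{Defn Mollifying net} with $\omega(\varepsilon)=\varepsilon$ is the scaling in force throughout this section.
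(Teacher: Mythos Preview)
Your proof is correct and follows essentially the same route as the paper: invoke Lemma~\ref{Lemma1} at each fixed $\varepsilon$ to obtain the solution $u_\varepsilon$ together with the energy estimate \eqref{Energy estimate}, then feed in the moderateness bounds on $a_\varepsilon, b_\varepsilon, u_{0,\varepsilon}, u_{1,\varepsilon}$ to conclude $C_1$-moderateness with exponent $N = N_a + N_b + \max(N_0,N_1)$. The paper's argument is slightly terser but otherwise identical.
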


\begin{proof}
    Let $a,b,u_0$ and $u_1$ as in assumptions. Then, there exists $N_1,N_2,N_3,N_4 \in \mathbb{N}$, such that
    \begin{equation*}
        \Vert a_{\varepsilon}\Vert_{L^{\infty}} \lesssim \varepsilon^{-N_1},\quad \Vert b_{\varepsilon}\Vert_{L^{\infty}} \lesssim \varepsilon^{-N_2},
    \end{equation*}
    and 
    \begin{equation*}
        \Vert u_{0,\varepsilon}\Vert_{H^s} \lesssim \varepsilon^{-N_3},\quad \Vert u_{1,\varepsilon}\Vert_{H^s} \lesssim \varepsilon^{-N_4}.
    \end{equation*}
    It follows from the energy estimate \eqref{Energy estimate}, that
    \begin{equation*}
        \Vert u_{\varepsilon}(t,\cdot)\Vert_1 \lesssim \varepsilon^{-N_1-N_2-\max \{N_3,N_4\}},
    \end{equation*}
    uniformly in $t\in [0,T]$, which means that the net $(u_{\varepsilon})_{\varepsilon}$ is $C_1$-moderate. This concludes the proof.
\end{proof}

As an alternative to Theorem \ref{Thm1 existence} in the case when $d>2s$ and the equation coefficients and data satisfy the hypothesis of Definition \ref{Defn2 V.W.S}, we have the following theorem for which we do not give the proof, since it is similar to the one of Theorem \ref{Thm1 existence}.

\begin{thm}\label{Thm2 existence}
    Assume that there exist $\big\{(L^{\frac{d}{s}}(\mathbb{R}^d)\cap L^{\frac{d}{2s}}(\mathbb{R}^d)),L^{\frac{d}{s}}(\mathbb{R}^d),H^{2s}(\mathbb{R}^d),H^{s}(\mathbb{R}^d)\big\}$-moderate approximations to $a,b,u_0$ and $u_1$ respectively, with $a_{\varepsilon} \geq 0$ and $b_{\varepsilon} \geq 0$. Then, the Cauchy problem \eqref{Equation sing} has a very weak solution.
\end{thm}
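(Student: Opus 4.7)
The plan is to run exactly the argument of Theorem \ref{Thm1 existence}, only with Lemma \ref{Lemma2} replacing Lemma \ref{Lemma1} throughout. First I would unpack the moderateness hypotheses: by assumption there exist $N_1,N_1',N_2,N_3,N_4\in\mathbb{N}_0$ such that
\begin{equation*}
    \Vert a_\varepsilon\Vert_{L^{d/s}}\lesssim \varepsilon^{-N_1},\quad \Vert a_\varepsilon\Vert_{L^{d/2s}}\lesssim \varepsilon^{-N_1'},\quad \Vert b_\varepsilon\Vert_{L^{d/s}}\lesssim \varepsilon^{-N_2},
\end{equation*}
and $\Vert u_{0,\varepsilon}\Vert_{H^{2s}}\lesssim \varepsilon^{-N_3}$, $\Vert u_{1,\varepsilon}\Vert_{H^{s}}\lesssim \varepsilon^{-N_4}$, uniformly in $\varepsilon\in(0,1]$.

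Next, for each fixed $\varepsilon\in(0,1]$ the regularised coefficients $a_\varepsilon,b_\varepsilon$ are smooth and non-negative, and the regularised Cauchy data sit in $H^{2s}\times H^s$. These are exactly the hypotheses of Lemma \ref{Lemma2}, so the approximating Cauchy problem \eqref{Approximating equation} admits a unique solution $u_\varepsilon\in C([0,T];H^{2s}(\mathbb{R}^d))\cap C^{1}([0,T];H^{s}(\mathbb{R}^d))$, and this solution obeys the energy estimate \eqref{Energy estimate1}.

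Finally I would substitute the moderateness bounds into \eqref{Energy estimate1}. Since $(1+\varepsilon^{-N})\lesssim \varepsilon^{-N}$ for $\varepsilon\in(0,1]$ and $N\geq 0$, the product on the right-hand side collapses to an inverse power of $\varepsilon$, giving
\begin{equation*}
    \sup_{t\in[0,T]}\Vert u_\varepsilon(t,\cdot)\Vert_2 \lesssim \varepsilon^{-N_1-N_1'-2N_2-\max\{N_3,N_4\}},
\end{equation*}
which is precisely $C_2$-moderateness with exponent $N:=N_1+N_1'+2N_2+\max\{N_3,N_4\}$. Combined with the fact that each $u_\varepsilon$ solves \eqref{Approximating equation} by construction, this verifies every clause of Definition \ref{Defn2 V.W.S} and concludes the existence of a very weak solution. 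There is no genuine obstacle here: the non-trivial work has already been done in Lemma \ref{Lemma2}, and Theorem \ref{Thm2 existence} reduces to checking that the polynomial-in-$\varepsilon^{-1}$ blow-up of the coefficients propagates, via the energy inequality, to a polynomial-in-$\varepsilon^{-1}$ bound on the solution net.
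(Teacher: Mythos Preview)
Your proposal is correct and is exactly the approach the paper intends: the paper omits the proof of Theorem \ref{Thm2 existence} entirely, stating only that it is similar to that of Theorem \ref{Thm1 existence}, and your write-up carries out precisely that analogy with Lemma \ref{Lemma2} in place of Lemma \ref{Lemma1}. One cosmetic point: you call $a_\varepsilon,b_\varepsilon$ ``smooth'', but the moderateness hypothesis does not guarantee this, nor does Lemma \ref{Lemma2} require it---membership in $L^{d/s}\cap L^{d/2s}$ (resp.\ $L^{d/s}$) together with non-negativity is all that is used.
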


\subsection{Uniqueness}
In what follows we want to prove the uniqueness of the very weak solution to the Cauchy problem \eqref{Equation sing} in both situations, either in the case when very weak solutions exist with the assumptions of Theorem \ref{Thm1 existence} or in the case of Theorem \ref{Thm2 existence}. We need the following definition.

\begin{defn}[\textbf{Negligibility}]\label{Defn negligibility}
    Let $X$ be a normed space endowed with the norm $\Vert\cdot\Vert_X$. A net of functions $(f_\varepsilon)_{\varepsilon\in(0,1]}$ from $X$ is said to be $X$-negligible, if the estimate
    \begin{equation}\label{Negligibility formula}
        \Vert f_\varepsilon\Vert_X \lesssim \varepsilon^k,
    \end{equation}
    is valid for all $k>0$.
\end{defn}
Roughly speaking, we understand the uniqueness of the very weak solution to the Cauchy problem \eqref{Equation sing}, in the sense that negligible changes in the approximations of the equation coefficients and initial data, lead to negligible changes in the corresponding very weak solutions. More precisely,
\begin{defn}[\textbf{Uniqueness}]\label{Defn1 uniqueness}
    We say that the Cauchy problem \eqref{Equation sing}, has a unique very weak solution, if for all families of approximations $(a_{\varepsilon})_{\varepsilon}$, $(\Tilde{a}_{\varepsilon})_{\varepsilon}$ and $(b_{\varepsilon})_{\varepsilon}$, $(\Tilde{b}_{\varepsilon})_{\varepsilon}$ for the equation coefficients $a$ and $b$, and families of approximations $(u_{0,\varepsilon})_{\varepsilon}$, $(\Tilde{u}_{0,\varepsilon})_{\varepsilon}$ and  $(u_{1,\varepsilon})_{\varepsilon}$, $(\Tilde{u}_{1,\varepsilon})_{\varepsilon}$ for the Cauchy data $u_0$ and $u_1$, such that the nets $(a_{\varepsilon}-\Tilde{a}_{\varepsilon})_{\varepsilon}$, $(b_{\varepsilon}-\Tilde{b}_{\varepsilon})_{\varepsilon}$, $(u_{0,\varepsilon}-\Tilde{u}_{0,\varepsilon})_{\varepsilon}$ and $(u_{1,\varepsilon}-\Tilde{u}_{1,\varepsilon})_{\varepsilon}$ are $\big\{L^{\infty}(\mathbb{R}^d),L^{\infty}(\mathbb{R}^d),H^{s}(\mathbb{R}^d),L^2(\mathbb{R}^d)\big\}$-negligible, it follows that the net
    \begin{equation*}
        \big(u_{\varepsilon}(t,\cdot)-\Tilde{u}_{\varepsilon}(t,\cdot)\big)_{\varepsilon}
    \end{equation*}
    is $L^2(\mathbb{R}^d)$-negligible for all $t\in [0,t]$, where $(u_{\varepsilon})_{\varepsilon}$ and $(\Tilde{u}_{\varepsilon})_{\varepsilon}$ are the families of solutions to the approximating Cauchy problems
    \begin{equation}\label{Approximating equation for u}
        \bigg\{
        \begin{array}{l}
        \partial_{t}^2u_{\varepsilon}(t,x) + (-\Delta)^{s}u_{\varepsilon}(t,x) + a_{\varepsilon}(x)u_{\varepsilon}(t,x) + b_{\varepsilon}(x)\partial_{t}u_{\varepsilon}(t,x)=0, \,\, (t,x)\in [0,T]\times\mathbb{R}^d,\\
        u_{\varepsilon}(0,x)=u_{0,\varepsilon}(x),\quad \partial_{t}u_{\varepsilon}(0,x)=u_{1,\varepsilon}(x), \quad x\in\mathbb{R}^d,
        \end{array}
    \end{equation}
    and
    \begin{equation}\label{Approximating equation for u tilde}
        \bigg\{
        \begin{array}{l}
        \partial_{t}^2\Tilde{u}_{\varepsilon}(t,x) + (-\Delta)^{s}\Tilde{u}_{\varepsilon}(t,x) + \Tilde{a}_{\varepsilon}(x)\Tilde{u}_{\varepsilon}(t,x) + \Tilde{b}_{\varepsilon}(x)\partial_{t}\Tilde{u}_{\varepsilon}(t,x)=0, \,\, (t,x)\in [0,T]\times\mathbb{R}^d,\\
        \Tilde{u}_{\varepsilon}(0,x)=\Tilde{u}_{0,\varepsilon}(x),\quad \partial_{t}\Tilde{u}_{\varepsilon}(0,x)=\Tilde{u}_{1,\varepsilon}(x), \quad x\in\mathbb{R}^d,
        \end{array}
    \end{equation}
    respectively.
\end{defn}

\begin{thm}\label{Thm1 uniqueness}
    Assume that $a,b \geq 0$, in the sense that their approximating nets are non-negative. Under the conditions of Theorem \ref{Thm1 existence}, the very weak solution to the Cauchy problem \eqref{Equation sing} is unique.
\end{thm}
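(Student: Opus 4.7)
\smallskip
\noindent\textbf{Proof plan.} The natural approach is to set $U_\varepsilon := u_\varepsilon - \tilde u_\varepsilon$ and prove that $(U_\varepsilon)_\varepsilon$ is $L^2$-negligible. Subtracting \eqref{Approximating equation for u tilde} from \eqref{Approximating equation for u} and using the telescoping identities
\begin{equation*}
    a_\varepsilon u_\varepsilon - \tilde a_\varepsilon \tilde u_\varepsilon = a_\varepsilon U_\varepsilon + (a_\varepsilon - \tilde a_\varepsilon)\tilde u_\varepsilon,\quad b_\varepsilon\partial_t u_\varepsilon - \tilde b_\varepsilon \partial_t \tilde u_\varepsilon = b_\varepsilon \partial_t U_\varepsilon + (b_\varepsilon - \tilde b_\varepsilon)\partial_t\tilde u_\varepsilon,
\end{equation*}
I obtain that $U_\varepsilon$ solves an inhomogeneous Cauchy problem with coefficients $a_\varepsilon, b_\varepsilon$, source $f_\varepsilon := -(a_\varepsilon-\tilde a_\varepsilon)\tilde u_\varepsilon - (b_\varepsilon-\tilde b_\varepsilon)\partial_t\tilde u_\varepsilon$, and initial data $u_{0,\varepsilon}-\tilde u_{0,\varepsilon}$, $u_{1,\varepsilon}-\tilde u_{1,\varepsilon}$.

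\smallskip
\noindent Next, I apply Duhamel's principle (Proposition \ref{Prop Duhamel}) with $L = (-\Delta)^s + a_\varepsilon$ and $\lambda = b_\varepsilon$ (the mass term can be absorbed into the operator $L$ since the proof of Proposition \ref{Prop Duhamel} only uses linearity) to write $U_\varepsilon(t,x) = W_\varepsilon(t,x) + \int_0^t V_\varepsilon(t,x;\tau)\,\d\tau$, where $W_\varepsilon$ is the homogeneous evolution with the above initial data, and $V_\varepsilon(\cdot,\cdot;\tau)$ is the homogeneous evolution starting at time $\tau$ with $V_\varepsilon(\tau,\cdot;\tau)=0$ and $\partial_t V_\varepsilon(\tau,\cdot;\tau)=f_\varepsilon(\tau,\cdot)$. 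For each of these homogeneous pieces I invoke the energy estimate of Lemma \ref{Lemma1}, which applies since $a_\varepsilon, b_\varepsilon \geq 0$ and both are in $L^\infty$. This gives
\begin{equation*}
    \Vert W_\varepsilon(t,\cdot)\Vert_1 \lesssim (1+\Vert a_\varepsilon\Vert_{L^\infty})(1+\Vert b_\varepsilon\Vert_{L^\infty})\bigl[\Vert u_{0,\varepsilon}-\tilde u_{0,\varepsilon}\Vert_{H^s}+\Vert u_{1,\varepsilon}-\tilde u_{1,\varepsilon}\Vert_{L^2}\bigr],
\end{equation*}
and the analogous bound for $V_\varepsilon$:
\begin{equation*}
    \Vert V_\varepsilon(t,\cdot;\tau)\Vert_1 \lesssim (1+\Vert a_\varepsilon\Vert_{L^\infty})(1+\Vert b_\varepsilon\Vert_{L^\infty})\Vert f_\varepsilon(\tau,\cdot)\Vert_{L^2}.
\end{equation*}

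\smallskip
\noindent The remaining task is to estimate the source. By the triangle and H\"older inequalities,
\begin{equation*}
    \Vert f_\varepsilon(\tau,\cdot)\Vert_{L^2} \leq \Vert a_\varepsilon-\tilde a_\varepsilon\Vert_{L^\infty}\Vert\tilde u_\varepsilon(\tau,\cdot)\Vert_{L^2} + \Vert b_\varepsilon-\tilde b_\varepsilon\Vert_{L^\infty}\Vert\partial_t\tilde u_\varepsilon(\tau,\cdot)\Vert_{L^2}.
\end{equation*}
Since $\tilde u_\varepsilon$ is a very weak solution in the sense of Definition \ref{Defn1 V.W.S}, it is $C_1$-moderate, so both $\Vert\tilde u_\varepsilon(\tau,\cdot)\Vert_{L^2}$ and $\Vert\partial_t\tilde u_\varepsilon(\tau,\cdot)\Vert_{L^2}$ are $\lesssim\varepsilon^{-N}$ for some $N\in\mathbb N_0$, uniformly in $\tau\in[0,T]$. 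On the other hand, the coefficient differences $(a_\varepsilon-\tilde a_\varepsilon)_\varepsilon$ and $(b_\varepsilon-\tilde b_\varepsilon)_\varepsilon$ are $L^\infty$-negligible by hypothesis, so they beat any polynomial power of $\varepsilon^{-1}$. The guiding principle ``moderate $\times$ negligible $=$ negligible'' then yields that $\Vert f_\varepsilon(\tau,\cdot)\Vert_{L^2}$ is negligible uniformly in $\tau\in[0,T]$, and integrating over the compact interval $[0,T]$ preserves this. Combining with the negligibility of the initial data differences and the moderateness of $\Vert a_\varepsilon\Vert_{L^\infty}$, $\Vert b_\varepsilon\Vert_{L^\infty}$ in the estimate for $W_\varepsilon$, I conclude that $\Vert U_\varepsilon(t,\cdot)\Vert_{L^2}$ is $L^2$-negligible uniformly on $[0,T]$.

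\smallskip
\noindent The main obstacle is not any single inequality but rather the careful bookkeeping: making sure that the combined factors from Lemma \ref{Lemma1} (which grow polynomially in $\varepsilon^{-1}$) never overwhelm the negligibility coming from the differences, which is the whole reason ``negligible'' was defined as decay \emph{faster than every} polynomial. A minor subtlety is that I invoke Lemma \ref{Lemma1} for the auxiliary Duhamel problem shifted to start at time $\tau$, which is legitimate because the energy argument in that lemma is autonomous in $t$.
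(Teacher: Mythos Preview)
Your proposal is correct and follows essentially the same route as the paper's own proof: form $U_\varepsilon=u_\varepsilon-\tilde u_\varepsilon$, derive the inhomogeneous problem with source $f_\varepsilon$ via the same telescoping, apply Duhamel's principle (Proposition~\ref{Prop Duhamel}) to split $U_\varepsilon=W_\varepsilon+\int_0^t V_\varepsilon\,\d\tau$, bound each piece with the energy estimate of Lemma~\ref{Lemma1}, and conclude by combining the $L^\infty$-moderateness of $a_\varepsilon,b_\varepsilon$, the $C_1$-moderateness of $\tilde u_\varepsilon$, and the negligibility of the differences. Your explicit remarks on the ``moderate $\times$ negligible $=$ negligible'' bookkeeping and on the time-autonomy of the energy argument are accurate and match what the paper uses implicitly.
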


\begin{proof}
    Let $(u_{\varepsilon})_{\varepsilon}$ and $(\Tilde{u}_{\varepsilon})_{\varepsilon}$ be the families of solutions to \eqref{Approximating equation for u} and \eqref{Approximating equation for u tilde} and assume that the nets $(a_{\varepsilon}-\Tilde{a}_{\varepsilon})_{\varepsilon}$, $(b_{\varepsilon}-\Tilde{b}_{\varepsilon})_{\varepsilon}$, $(u_{0,\varepsilon}-\Tilde{u}_{0,\varepsilon})_{\varepsilon}$ and $(u_{1,\varepsilon}-\Tilde{u}_{1,\varepsilon})_{\varepsilon}$ are $L^{\infty}(\mathbb{R}^d)$, $L^{\infty}(\mathbb{R}^d)$, $H^{s}(\mathbb{R}^d)$, $L^2(\mathbb{R}^d)$-negligible, respectively. The function $U_{\varepsilon}(t,x)$ defined by
    \begin{equation*}
        U_{\varepsilon}(t,x):=u_{\varepsilon}(t,x)-\Tilde{u}_{\varepsilon}(t,x),
    \end{equation*}
    satisfies
    \begin{equation}\label{Equation1 U}
        \bigg\{
        \begin{array}{l}
        \partial_{t}^2U_{\varepsilon}(t,x) + (-\Delta)^{s}U_{\varepsilon}(t,x) + a_{\varepsilon}(x)U_{\varepsilon}(t,x) + b_{\varepsilon}(x)\partial_{t}U_{\varepsilon}(t,x)=f_{\varepsilon}(t,x),\\
        U_{\varepsilon}(0,x)=(u_{0,\varepsilon}-\Tilde{u}_{0,\varepsilon})(x),\quad \partial_{t}U_{\varepsilon}(0,x)=(u_{1,\varepsilon}-\Tilde{u}_{1,\varepsilon})(x),
        \end{array}
    \end{equation}
    for $(t,x)\in [0,T]\times\mathbb{R}^d$, where,
    \begin{equation*}
        f_{\varepsilon}(t,x):=\big(\Tilde{a}_{\varepsilon}(x)-a_{\varepsilon}(x)\big)\Tilde{u}_{\varepsilon}(t,x) + \big(\Tilde{b}_{\varepsilon}(x)-b_{\varepsilon}(x)\big)\partial_{t}\Tilde{u}_{\varepsilon}(t,x).
    \end{equation*}
    According to Duhamel's principle (see Proposition \ref{Prop Duhamel}), the solution to \eqref{Equation1 U} has the following representation
    \begin{equation}\label{Duhamel1 solution}
        U_{\varepsilon}(t,x) = W_{\varepsilon}(t,x) + \int_{0}^{t}V_{\varepsilon}(t,x;\tau)\d \tau,
    \end{equation}
    where $W_{\varepsilon}(t,x)$ is the solution to the homogeneous problem
    \begin{equation}\label{Equation1 W}
        \bigg\{
        \begin{array}{l}
        \partial_{t}^2W_{\varepsilon}(t,x) + (-\Delta)^{s}W_{\varepsilon}(t,x) + a_{\varepsilon}(x)W_{\varepsilon}(t,x) + b_{\varepsilon}(x)\partial_{t}W_{\varepsilon}(t,x)=0,\\
        W_{\varepsilon}(0,x)=(u_{0,\varepsilon}-\Tilde{u}_{0,\varepsilon})(x),\quad \partial_{t}W_{\varepsilon}(0,x)=(u_{1,\varepsilon}-\Tilde{u}_{1,\varepsilon})(x),
        \end{array}
    \end{equation}
    for $(t,x)\in [0,T]\times\mathbb{R}^d$, and $V_{\varepsilon}(t,x;\tau)$ solves
    \begin{equation}\label{Equation1 V}
        \bigg\{
        \begin{array}{l}
        \partial_{t}^2V_{\varepsilon}(t,x;\tau) + (-\Delta)^{s}V_{\varepsilon}(t,x;\tau) + a_{\varepsilon}(x)V_{\varepsilon}(t,x;\tau) + b_{\varepsilon}(x)\partial_{t}V_{\varepsilon}(t,x;\tau)=0,\\
        V_{\varepsilon}(\tau,x;\tau)=0,\quad \partial_{t}V_{\varepsilon}(\tau,x;\tau)=f_{\varepsilon}(\tau,x),
        \end{array}
    \end{equation}
    for $(t,x)\in [\tau,T]\times\mathbb{R}^d$ and $\tau\in [0,T]$.
    By taking the $L^2$-norm on both sides of \eqref{Duhamel1 solution} and using Minkowski's integral inequality, we get
    \begin{equation}\label{Duhamel1 solution estimate}
        \Vert U_{\varepsilon}(t,\cdot)\Vert_{L^2} \leq \Vert W_{\varepsilon}(t,\cdot)\Vert_{L^2} + \int_{0}^{t}\Vert V_{\varepsilon}(t,\cdot;\tau)\Vert_{L^2}\d \tau.
    \end{equation}
    The energy estimate \eqref{Energy estimate} allows us to control $\Vert W_{\varepsilon}(t,\cdot)\Vert_{L^2}$ and $\Vert V_{\varepsilon}(t,\cdot;\tau)\Vert_{L^2}$ to get
    \begin{equation*}
        \Vert W_{\varepsilon}(t,\cdot)\Vert_{L^2} \lesssim \Big(1+\Vert a_{\varepsilon}\Vert_{L^{\infty}}\Big)\Big(1+\Vert b_{\varepsilon}\Vert_{L^{\infty}}\Big)\bigg[\Vert u_{0,\varepsilon}-\Tilde{u}_{0,\varepsilon}\Vert_{H^s} + \Vert u_{1,\varepsilon}-\Tilde{u}_{1,\varepsilon}\Vert_{L^2}\bigg],
    \end{equation*}
    and
    \begin{equation*}
        \Vert V_{\varepsilon}(t,\cdot;\tau)\Vert_{L^2} \lesssim \Big(1+\Vert a_{\varepsilon}\Vert_{L^{\infty}}\Big)\Big(1+\Vert b_{\varepsilon}\Vert_{L^{\infty}}\Big)\bigg[\Vert f_{\varepsilon}(\tau,\cdot)\Vert_{L^2}\bigg].
    \end{equation*}
    By taking into consideration that $t\in[0,T]$, it follows from \eqref{Duhamel1 solution estimate} that
    \begin{align}\label{Estimate U}
        \Vert U_{\varepsilon}(t,\cdot)\Vert_{L^2} \lesssim \Big(1+\Vert a_{\varepsilon}\Vert_{L^{\infty}}\Big)\Big(1+\Vert b_{\varepsilon}\Vert_{L^{\infty}}\Big)& \bigg[\Vert u_{0,\varepsilon}-\Tilde{u}_{0,\varepsilon}\Vert_{H^s} +\\
        & \nonumber\Vert u_{1,\varepsilon}-\Tilde{u}_{1,\varepsilon}\Vert_{L^2} + \int_{0}^{T}\Vert f_{\varepsilon}(\tau,\cdot)\Vert_{L^2}\d \tau\bigg],
    \end{align}
    where $\Vert f_{\varepsilon}(\tau,\cdot)\Vert_{L^2}$ is estimated as follows,
    \begin{align}\label{Estimate f_epsilon}
        \Vert f_{\varepsilon}(\tau,\cdot)\Vert_{L^2} & \leq \Vert (\Tilde{a}_{\varepsilon}(\cdot)-a_{\varepsilon}(\cdot))\Tilde{u}_{\varepsilon}(\tau,\cdot)\Vert_{L^2} + \Vert(\Tilde{b}_{\varepsilon}(\cdot)-b_{\varepsilon}(\cdot))\partial_{t}\Tilde{u}_{\varepsilon}(\tau,\cdot)\Vert_{L^2}\\
        & \nonumber\leq \Vert \Tilde{a}_{\varepsilon} - a_{\varepsilon}\Vert_{L^{\infty}}\Vert \Tilde{u}_{\varepsilon}(\tau,\cdot)\Vert_{L^2} + \Vert \Tilde{b}_{\varepsilon} - b_{\varepsilon}\Vert_{L^{\infty}}\Vert \partial_{t}\Tilde{u}_{\varepsilon}(\tau,\cdot)\Vert_{L^2}.
    \end{align}
    On the one hand, the nets $(a_{\varepsilon})_{\varepsilon}$ and $(b_{\varepsilon})_{\varepsilon}$ are $L^{\infty}$-moderate by assumption, and the net $(\Tilde{u}_{\varepsilon})_{\varepsilon}$ is $C_1$-moderate being a very weak solution to \eqref{Approximating equation for u tilde}. On the other hand, the nets $(a_{\varepsilon}-\Tilde{a}_{\varepsilon})_{\varepsilon}$, $(b_{\varepsilon}-\Tilde{b}_{\varepsilon})_{\varepsilon}$, $(u_{0,\varepsilon}-\Tilde{u}_{0,\varepsilon})_{\varepsilon}$ and $(u_{1,\varepsilon}-\Tilde{u}_{1,\varepsilon})_{\varepsilon}$ are $L^{\infty}(\mathbb{R}^d)$, $L^{\infty}(\mathbb{R}^d)$, $H^{s}(\mathbb{R}^d)$, $L^2(\mathbb{R}^d)$-negligible. It follows from \eqref{Estimate U} combined with \eqref{Estimate f_epsilon} that
    \begin{equation*}
        \Vert U_{\varepsilon}(t,\cdot)\Vert_{L^2} \lesssim \varepsilon^{k},
    \end{equation*}
    for all $k>0$, showing the uniqueness of the very weak solution.    
\end{proof}
The analogue to Definition \ref{Defn1 uniqueness} and Theorem \ref{Thm1 uniqueness} in the case when $d>2s$ with Theorem \ref{Thm2 existence}'s background, read:

\begin{defn}\label{Defn2 uniqueness}
    We say that the Cauchy problem \eqref{Equation sing}, has a unique very weak solution, if for all families of approximations $(a_{\varepsilon})_{\varepsilon}$, $(\Tilde{a}_{\varepsilon})_{\varepsilon}$ and $(b_{\varepsilon})_{\varepsilon}$, $(\Tilde{b}_{\varepsilon})_{\varepsilon}$ for the equation coefficients $a$ and $b$, and families of approximations $(u_{0,\varepsilon})_{\varepsilon}$, $(\Tilde{u}_{0,\varepsilon})_{\varepsilon}$ and  $(u_{1,\varepsilon})_{\varepsilon}$, $(\Tilde{u}_{1,\varepsilon})_{\varepsilon}$ for the Cauchy data $u_0$ and $u_1$, such that the nets $(a_{\varepsilon}-\Tilde{a}_{\varepsilon})_{\varepsilon}$, $(b_{\varepsilon}-\Tilde{b}_{\varepsilon})_{\varepsilon}$, $(u_{0,\varepsilon}-\Tilde{u}_{0,\varepsilon})_{\varepsilon}$ and $(u_{1,\varepsilon}-\Tilde{u}_{1,\varepsilon})_{\varepsilon}$ are $\big\{(L^{\frac{d}{s}}(\mathbb{R}^d)\cap L^{\frac{d}{2s}}(\mathbb{R}^d)),L^{\frac{d}{s}}(\mathbb{R}^d),H^{2s}(\mathbb{R}^d),H^{s}(\mathbb{R}^d)\big\}$-negligible, it follows that the net $\big(u_{\varepsilon}(t,\cdot)-\Tilde{u}_{\varepsilon}(t,\cdot)\big)_{\varepsilon\in (0,1]}$, is $L^2(\mathbb{R}^d)$-negligible for all $t\in [0,T]$, where $(u_{\varepsilon})_{\varepsilon}$ and $(\Tilde{u}_{\varepsilon})_{\varepsilon}$ are the families of solutions to the corresponding approximating Cauchy problems.
\end{defn}

\begin{thm}\label{Thm2 uniqueness}
    Let $d>2s$ and assume that $a,b \geq 0$, in the sense that there approximating nets are non-negative. With the assumptions of Theorem \ref{Thm2 existence}, the very weak solution to the Cauchy problem \eqref{Equation sing} is unique.
\end{thm}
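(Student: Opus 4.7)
The plan is to mimic the proof of Theorem \ref{Thm1 uniqueness} verbatim, with Lemma \ref{Lemma2} replacing Lemma \ref{Lemma1} in the energy step and a H\"older-plus-Sobolev estimate replacing the trivial $L^{\infty}$-bound on the coefficients.

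First I would set $U_{\varepsilon}:=u_{\varepsilon}-\tilde{u}_{\varepsilon}$, check that it solves the inhomogeneous equation with the same principal part, initial data $(u_{0,\varepsilon}-\tilde{u}_{0,\varepsilon}, u_{1,\varepsilon}-\tilde{u}_{1,\varepsilon})$ and source
\begin{equation*}
f_{\varepsilon}=(\tilde{a}_{\varepsilon}-a_{\varepsilon})\tilde{u}_{\varepsilon}+(\tilde{b}_{\varepsilon}-b_{\varepsilon})\partial_{t}\tilde{u}_{\varepsilon},
\end{equation*}
and then split $U_{\varepsilon}=W_{\varepsilon}+\int_{0}^{t}V_{\varepsilon}(\cdot,\cdot;\tau)\,\d\tau$ via Duhamel's principle (Proposition \ref{Prop Duhamel}). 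Minkowski's integral inequality reduces the problem to bounding $\|W_{\varepsilon}(t,\cdot)\|_{L^{2}}$ and $\|V_{\varepsilon}(t,\cdot;\tau)\|_{L^{2}}$ separately.

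The homogeneous piece $W_{\varepsilon}$ is handled directly by Lemma \ref{Lemma2}: the estimate \eqref{Energy estimate1} controls $\|W_{\varepsilon}(t,\cdot)\|_{L^{2}}$ by a polynomial-in-$\varepsilon^{-1}$ prefactor times $\|u_{0,\varepsilon}-\tilde{u}_{0,\varepsilon}\|_{H^{2s}}+\|u_{1,\varepsilon}-\tilde{u}_{1,\varepsilon}\|_{H^{s}}$, and the negligibility assumptions on the data differences make this quantity $\varepsilon^{k}$-small for every $k>0$. For $V_{\varepsilon}$, rather than re-invoking Lemma \ref{Lemma2} (which would require $f_{\varepsilon}(\tau,\cdot)\in H^{s}$), I would argue directly from the energy functional: $\partial_{t}E_{V_{\varepsilon}}\leq 0$ gives $\|\partial_{t}V_{\varepsilon}(t,\cdot;\tau)\|_{L^{2}}\leq\|f_{\varepsilon}(\tau,\cdot)\|_{L^{2}}$, hence $\|V_{\varepsilon}(t,\cdot;\tau)\|_{L^{2}}\leq T\|f_{\varepsilon}(\tau,\cdot)\|_{L^{2}}$ after integrating $V_{\varepsilon}(t,x;\tau)=\int_{\tau}^{t}\partial_{s}V_{\varepsilon}(s,x;\tau)\,\d s$ in time.

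The key new step is the $L^{2}$-bound for the source. H\"older's inequality with exponents $(d/s, 2d/(d-2s))$ combined with Proposition \ref{Prop. Sobolev estimate} yields
\begin{equation*}
\|(\tilde{a}_{\varepsilon}-a_{\varepsilon})\tilde{u}_{\varepsilon}(\tau,\cdot)\|_{L^{2}}\lesssim\|\tilde{a}_{\varepsilon}-a_{\varepsilon}\|_{L^{d/s}}\|(-\Delta)^{\frac{s}{2}}\tilde{u}_{\varepsilon}(\tau,\cdot)\|_{L^{2}},
\end{equation*}
and analogously for the $(\tilde{b}_{\varepsilon}-b_{\varepsilon})\partial_{t}\tilde{u}_{\varepsilon}$ contribution with $(-\Delta)^{s/2}\partial_{t}\tilde{u}_{\varepsilon}$ in place of $(-\Delta)^{s/2}\tilde{u}_{\varepsilon}$. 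The $C_{2}$-moderateness of $(\tilde{u}_{\varepsilon})_{\varepsilon}$ bounds both fractional-Laplacian norms uniformly in $t$ by $\varepsilon^{-N}$ for some fixed $N$, while the negligibility of $(\tilde{a}_{\varepsilon}-a_{\varepsilon})_{\varepsilon}$ in $L^{d/s}\cap L^{d/2s}$ and of $(\tilde{b}_{\varepsilon}-b_{\varepsilon})_{\varepsilon}$ in $L^{d/s}$ beats this polynomial blow-up. Assembling these estimates produces $\|U_{\varepsilon}(t,\cdot)\|_{L^{2}}\lesssim\varepsilon^{k}$ for every $k>0$, uniformly in $t\in[0,T]$, which is the desired negligibility.

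The principal obstacle, compared with Theorem \ref{Thm1 uniqueness}, is the loss of $L^{\infty}$-control on the coefficients: one cannot simply factor $\|\tilde{a}_{\varepsilon}-a_{\varepsilon}\|_{L^{\infty}}$ out of the product. Trading this for an $L^{d/s}$-bound via H\"older costs one $(-\Delta)^{s/2}$-derivative on the companion factor, which is precisely why the stronger $C_{2}$-moderateness, and in particular the $C^{1}([0,T];H^{s}(\mathbb{R}^{d}))$-regularity needed for the $b_{\varepsilon}$-term involving $\partial_{t}\tilde{u}_{\varepsilon}$, is built into the hypotheses inherited from Theorem \ref{Thm2 existence}.
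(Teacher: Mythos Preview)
Your argument is essentially correct and follows the template the paper intends (the paper gives no explicit proof of this theorem, only the remark that it is analogous to Theorem~\ref{Thm1 uniqueness}). Your identification of the two genuine changes---replacing the $L^\infty$ factorisation on $f_\varepsilon$ by the H\"older/Sobolev estimate \eqref{Estimate au}--\eqref{Estimate bu_t}, and treating $V_\varepsilon$ by the raw energy decay $E(t)\le E(\tau)=\|f_\varepsilon(\tau,\cdot)\|_{L^2}^2$ plus time integration rather than by the full estimate \eqref{Energy estimate1}---is exactly right. The second point is a useful observation: a literal application of Lemma~\ref{Lemma2} to $V_\varepsilon$ would demand $f_\varepsilon(\tau,\cdot)\in H^s$, which is not readily available, whereas your route only needs $f_\varepsilon(\tau,\cdot)\in L^2$.

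One small correction. You invoke ``$C_2$-moderateness of $(\tilde u_\varepsilon)_\varepsilon$'' to control $\|(-\Delta)^{s/2}\partial_t\tilde u_\varepsilon(\tau,\cdot)\|_{L^2}$, but the norm $\|\cdot\|_2$ as defined in the paper contains only $\|u_t\|_{L^2}$, not $\|(-\Delta)^{s/2}u_t\|_{L^2}$. The moderate bound you need is nonetheless available: it is precisely the intermediate estimate \eqref{Estimate Delta u_t} from the proof of Lemma~\ref{Lemma2}, applied to $\tilde u_\varepsilon$ with coefficients $\tilde a_\varepsilon,\tilde b_\varepsilon$ and data $\tilde u_{0,\varepsilon},\tilde u_{1,\varepsilon}$. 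Cite that estimate directly rather than $C_2$-moderateness, and the argument is complete.
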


\section{Coherence with classical theory}
The question to be answered here is that, in the case when $a,b\in L^{\infty}(\mathbb{R}^d)$, $u_0 \in H^{s}(\mathbb{R}^d)$ and $u_1 \in L^{2}(\mathbb{R}^d)$ or alternatively when $(a,b)\in (L^{\frac{d}{s}}(\mathbb{R}^d)\cap L^{\frac{d}{2s}}(\mathbb{R}^d))\times L^{\frac{d}{s}}(\mathbb{R}^d)$, $u_0 \in H^{2s}(\mathbb{R}^d)$ and $u_1 \in H^{s}(\mathbb{R}^d)$ and a classical solution to the Cauchy problem
\begin{equation}\label{Equation coherence}
        \bigg\lbrace
        \begin{array}{l}
        u_{tt}(t,x) + (-\Delta)^{s}u(t,x) + a(x)u(t,x) + b(x)u_{t}(t,x)=0, \,\, (t,x)\in [0,T]\times\mathbb{R}^d,\\
        u(0,x)=u_{0}(x),\quad u_{t}(0,x)=u_{1}(x), \quad x\in\mathbb{R}^d,
        \end{array}
\end{equation}
exists, does the very weak solution obtained via regularisation techniques recapture it?

\begin{thm}\label{Thm1 coherence}
    Let $\psi$ be a Friedrichs-mollifier. Assume $a,b\in L^{\infty}(\mathbb{R}^d)$ be non-negative and suppose that $u_0 \in H^{s}(\mathbb{R}^d)$ and $u_1 \in L^{2}(\mathbb{R}^d)$. Then, for any regularising families $(a_{\varepsilon})_{\varepsilon}=(a\ast\psi_{\varepsilon})_{\varepsilon}$ and $(b_{\varepsilon})_{\varepsilon}=(b\ast\psi_{\varepsilon})_{\varepsilon}$ for the equation coefficients, satisfying
    \begin{equation}\label{approx.condition1}
        \lVert a_{\varepsilon} - a\rVert_{L^{\infty}} \rightarrow 0,\quad\text{and}\quad \lVert b_{\varepsilon} - b\rVert_{L^{\infty}} \rightarrow 0,
    \end{equation} 
    and any regularising families $(u_{0,\varepsilon})_{\varepsilon}=(u_{0}\ast\psi_{\varepsilon})_{\varepsilon}$ and $(u_{1,\varepsilon})_{\varepsilon}=(u_{1}\ast\psi_{\varepsilon})_{\varepsilon}$ for the initial data, the net $(u_{\varepsilon})_{\varepsilon}$ converges to the classical solution (given by Lemma \ref{Lemma1}) of the Cauchy problem (\ref{Equation coherence}) in $L^{2}$ as $\varepsilon \rightarrow 0$.
\end{thm}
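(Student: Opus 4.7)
The plan is to mimic the structure of the uniqueness proof (Theorem \ref{Thm1 uniqueness}), but now comparing the very weak solution $u_{\varepsilon}$ to the classical solution $u$ provided by Lemma \ref{Lemma1}. Define
\begin{equation*}
    U_{\varepsilon}(t,x) := u_{\varepsilon}(t,x) - u(t,x).
\end{equation*}
Subtracting the two equations, one checks that $U_{\varepsilon}$ solves
\begin{equation*}
    \begin{array}{l}
    \partial_{t}^{2}U_{\varepsilon} + (-\Delta)^{s}U_{\varepsilon} + a_{\varepsilon}(x)U_{\varepsilon} + b_{\varepsilon}(x)\partial_{t}U_{\varepsilon} = f_{\varepsilon}(t,x),\\
    U_{\varepsilon}(0,x) = u_{0,\varepsilon}(x) - u_{0}(x),\quad \partial_{t}U_{\varepsilon}(0,x) = u_{1,\varepsilon}(x) - u_{1}(x),
    \end{array}
\end{equation*}
with the source term
\begin{equation*}
    f_{\varepsilon}(t,x) := \bigl(a(x) - a_{\varepsilon}(x)\bigr)u(t,x) + \bigl(b(x) - b_{\varepsilon}(x)\bigr)u_{t}(t,x).
\end{equation*}

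Next, I would apply Duhamel's principle (Proposition \ref{Prop Duhamel}) to represent $U_{\varepsilon}$ as a homogeneous part $W_{\varepsilon}$ with the singular initial data above, plus a time integral of auxiliary problems $V_{\varepsilon}(t,x;\tau)$ having source $f_{\varepsilon}(\tau,\cdot)$ as their first-order Cauchy data. Taking the $L^{2}$-norm and using Minkowski's integral inequality as in \eqref{Duhamel1 solution estimate}, the energy estimate \eqref{Energy estimate} from Lemma \ref{Lemma1}, applied to $W_{\varepsilon}$ and $V_{\varepsilon}$, yields
\begin{equation*}
    \Vert U_{\varepsilon}(t,\cdot)\Vert_{L^{2}} \lesssim \bigl(1+\Vert a_{\varepsilon}\Vert_{L^{\infty}}\bigr)\bigl(1+\Vert b_{\varepsilon}\Vert_{L^{\infty}}\bigr)\bigg[\Vert u_{0,\varepsilon}-u_{0}\Vert_{H^{s}} + \Vert u_{1,\varepsilon}-u_{1}\Vert_{L^{2}} + \int_{0}^{T}\Vert f_{\varepsilon}(\tau,\cdot)\Vert_{L^{2}}\d\tau\bigg].
\end{equation*}
By \eqref{approx.condition1}, the prefactor $(1+\Vert a_{\varepsilon}\Vert_{L^{\infty}})(1+\Vert b_{\varepsilon}\Vert_{L^{\infty}})$ is bounded uniformly in $\varepsilon$.

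It then remains to show that each term in the bracket tends to zero. The initial data terms $\Vert u_{0,\varepsilon}-u_{0}\Vert_{H^{s}}$ and $\Vert u_{1,\varepsilon}-u_{1}\Vert_{L^{2}}$ vanish as $\varepsilon\to 0$ by the standard approximation properties of Friedrichs mollification in $H^{s}$ and $L^{2}$. For the source term, I would estimate
\begin{equation*}
    \Vert f_{\varepsilon}(\tau,\cdot)\Vert_{L^{2}} \leq \Vert a-a_{\varepsilon}\Vert_{L^{\infty}}\Vert u(\tau,\cdot)\Vert_{L^{2}} + \Vert b-b_{\varepsilon}\Vert_{L^{\infty}}\Vert u_{t}(\tau,\cdot)\Vert_{L^{2}},
\end{equation*}
and invoke Lemma \ref{Lemma1} applied to the classical solution $u$ itself to get $\Vert u(\tau,\cdot)\Vert_{L^{2}} + \Vert u_{t}(\tau,\cdot)\Vert_{L^{2}} \lesssim \Vert u_{0}\Vert_{H^{s}} + \Vert u_{1}\Vert_{L^{2}}$ uniformly in $\tau\in[0,T]$. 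Combined with \eqref{approx.condition1}, dominated convergence (or simply the uniformity of the $L^{\infty}$ convergence) yields $\int_{0}^{T}\Vert f_{\varepsilon}(\tau,\cdot)\Vert_{L^{2}}\d\tau \to 0$.

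The main point, more than an obstacle, is to observe that although $u_{\varepsilon}$ is only guaranteed to be $C_{1}$-moderate (hence potentially of polynomial blow-up in $\varepsilon$), the \emph{difference} $U_{\varepsilon}$ is driven exclusively by quantities that vanish with $\varepsilon$; the $C_{1}$-moderate bound is never actually used to conclude convergence, since the classical solution $u$ appears as the reference term in $f_{\varepsilon}$ and its energy is controlled independently of $\varepsilon$. Putting the three vanishing contributions together gives $\sup_{t\in[0,T]}\Vert u_{\varepsilon}(t,\cdot) - u(t,\cdot)\Vert_{L^{2}} \to 0$ as $\varepsilon\to 0$, which is the desired coherence.
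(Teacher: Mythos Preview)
Your proposal is correct and follows essentially the same approach as the paper's own proof: define $U_\varepsilon = u_\varepsilon - u$, derive the inhomogeneous equation with source $(a-a_\varepsilon)u + (b-b_\varepsilon)u_t$, apply Duhamel's principle and the energy estimate of Lemma~\ref{Lemma1} to both the homogeneous and auxiliary pieces, and then use the uniform boundedness of $\|a_\varepsilon\|_{L^\infty}$, $\|b_\varepsilon\|_{L^\infty}$ together with the convergence of the mollified data and coefficients to conclude. Your additional remark that the $C_1$-moderateness of $u_\varepsilon$ plays no role here (because the classical solution $u$ is the reference in the source term) is a correct and clarifying observation.
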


\begin{proof}
    Let $(u_{\varepsilon})_{\varepsilon}$ be the very weak solution given by Theorem \ref{Thm1 existence} and $u$ the classical one, as in Lemma \ref{Lemma1}. The classical solution satisfies
    \begin{equation}\label{Equation coherence classic}
        \bigg\lbrace
        \begin{array}{l}
        u_{tt}(t,x) + (-\Delta)^{s}u(t,x) + a(x)u(t,x) + b(x)u_{t}(t,x)=0, \,\, (t,x)\in [0,T]\times\mathbb{R}^d,\\
        u(0,x)=u_{0}(x),\quad u_{t}(0,x)=u_{1}(x), \quad x\in\mathbb{R}^d,
        \end{array}
    \end{equation}
    and $(u_{\varepsilon})_{\varepsilon}$ solves
    \begin{equation}\label{Equation coherence VWS}
        \bigg\{
        \begin{array}{l}
        \partial_{t}^2u_{\varepsilon}(t,x) + (-\Delta)^{s}u_{\varepsilon}(t,x) + a_{\varepsilon}(x)u_{\varepsilon}(t,x) + b_{\varepsilon}(x)\partial_{t}u_{\varepsilon}(t,x)=0, \,\, (t,x)\in [0,T]\times\mathbb{R}^d,\\
        u_{\varepsilon}(0,x)=u_{0,\varepsilon}(x),\quad \partial_{t}u_{\varepsilon}(0,x)=u_{1,\varepsilon}(x), \quad x\in\mathbb{R}^d.
        \end{array}
    \end{equation}
    Denoting $U_{\varepsilon}(t,x):=u_{\varepsilon}(t,x)-u(t,x)$, we have that $U_{\varepsilon}$ solves the Cauchy problem
    \begin{equation}\label{Equation coherence V}
        \bigg\{
        \begin{array}{l}
        \partial_{t}^2U_{\varepsilon}(t,x) + (-\Delta)^{s}U_{\varepsilon}(t,x) + a_{\varepsilon}(x)U_{\varepsilon}(t,x) + b_{\varepsilon}(x)\partial_{t}U_{\varepsilon}(t,x)=\Theta_{\varepsilon}(t,x),\\
        U_{\varepsilon}(0,x)=(u_{0,\varepsilon}-u_0)(x),\quad \partial_{t}U_{\varepsilon}(0,x)=(u_{1,\varepsilon}-u_1)(x),
        \end{array}
    \end{equation}
    where
    \begin{equation}
        \Theta_{\varepsilon}(t,x):= -\big(a_{\varepsilon}(x)-a(x)\big)u(t,x) - \big(b_{\varepsilon}(x)-b(x)\big)\partial_{t}u(t,x).
    \end{equation}
    Thanks to Duhamel's principle, $U_{\varepsilon}$ can be represented by
    \begin{equation}\label{Duhamel2 solution}
        U_{\varepsilon}(t,x) = W_{\varepsilon}(t,x) + \int_{0}^{t}V_{\varepsilon}(t,x;\tau)\d \tau,
    \end{equation}
    where $W_{\varepsilon}(t,x)$ is the solution to the homogeneous problem
    \begin{equation}\label{Equation2 W}
        \bigg\{
        \begin{array}{l}
        \partial_{t}^2W_{\varepsilon}(t,x) + (-\Delta)^{s}W_{\varepsilon}(t,x) + a_{\varepsilon}(x)W_{\varepsilon}(t,x) + b_{\varepsilon}(x)\partial_{t}W_{\varepsilon}(t,x)=0,\\
        W_{\varepsilon}(0,x)=(u_{0,\varepsilon}-u_0)(x),\quad \partial_{t}W_{\varepsilon}(0,x)=(u_{1,\varepsilon}-u_1)(x),
        \end{array}
    \end{equation}
    for $(t,x)\in [0,T]\times\mathbb{R}^d$, and $V_{\varepsilon}(t,x;\tau)$ solves
    \begin{equation}\label{Equation2 V}
        \bigg\{
        \begin{array}{l}
        \partial_{t}^2V_{\varepsilon}(t,x;\tau) + (-\Delta)^{s}V_{\varepsilon}(t,x;\tau) + a_{\varepsilon}(x)V_{\varepsilon}(t,x;\tau) + b_{\varepsilon}(x)\partial_{t}V_{\varepsilon}(t,x;\tau)=0,\\
        V_{\varepsilon}(\tau,x;\tau)=0,\quad \partial_{t}V_{\varepsilon}(\tau,x;\tau)=\Theta_{\varepsilon}(\tau,x),
        \end{array}
    \end{equation}
    for $(t,x)\in [\tau,T]\times\mathbb{R}^d$ and $\tau\in [0,T]$. We take the $L^2$-norm in \eqref{Duhamel2 solution} and we argue as in the proof of Theorem \ref{Thm1 uniqueness}. We obtain
    \begin{equation}\label{Duhamel2 solution estimate}
        \Vert U_{\varepsilon}(t,\cdot)\Vert_{L^2} \leq \Vert W_{\varepsilon}(t,\cdot)\Vert_{L^2} + \int_{0}^{t}\Vert V_{\varepsilon}(t,\cdot;\tau)\Vert_{L^2}\d \tau,
    \end{equation}
    where
    \begin{equation*}
        \Vert W_{\varepsilon}(t,\cdot)\Vert_{L^2} \lesssim \Big(1+\Vert a_{\varepsilon}\Vert_{L^{\infty}}\Big)\Big(1+\Vert b_{\varepsilon}\Vert_{L^{\infty}}\Big)\bigg[\Vert u_{0,\varepsilon}-u_{0}\Vert_{H^s} + \Vert u_{1,\varepsilon}-u_{1,}\Vert_{L^2}\bigg],
    \end{equation*}
    and
    \begin{equation*}
        \Vert V_{\varepsilon}(t,\cdot;\tau)\Vert_{L^2} \lesssim \Big(1+\Vert a_{\varepsilon}\Vert_{L^{\infty}}\Big)\Big(1+\Vert b_{\varepsilon}\Vert_{L^{\infty}}\Big)\bigg[\Vert \Theta_{\varepsilon}(\tau,\cdot)\Vert_{L^2}\bigg],
    \end{equation*}
    by the energy estimate from Lemma \ref{Lemma1}, and $\Theta_{\varepsilon}$ is estimated by
    \begin{equation}\label{Estimate Theta_epsilon}
        \Vert \Theta_{\varepsilon}(\tau,\cdot)\Vert_{L^2} \leq \Vert a_{\varepsilon} - a\Vert_{L^{\infty}}\Vert u(\tau,\cdot)\Vert_{L^2} + \Vert b_{\varepsilon} - b\Vert_{L^{\infty}}\Vert \partial_{t}u(\tau,\cdot)\Vert_{L^2}.
    \end{equation}
    First, one observes that $\Vert a_{\varepsilon}\Vert_{L^{\infty}}<\infty$ and $\Vert b_{\varepsilon}\Vert_{L^{\infty}}<\infty$ uniformly in $\varepsilon$ by the fact that $a,b\in L^{\infty}(\mathbb{R}^d)$ and $\Vert u(\tau,\cdot)\Vert_{L^2}$ and $\Vert \partial_{t}u(\tau,\cdot)\Vert_{L^2}$ are bounded as well, since $u$ is a classical solution to \eqref{Equation coherence}. This, together with
    \begin{equation*}
        \lVert a_{\varepsilon} - a\rVert_{L^{\infty}} \rightarrow 0,\quad\text{and}\quad \lVert b_{\varepsilon} - b\rVert_{L^{\infty}} \rightarrow 0,\quad \text{as } \varepsilon\rightarrow 0,
    \end{equation*}
    from the assumptions, and
    \begin{equation*}
        \Vert u_{0,\varepsilon}-u_{0}\Vert_{H^s} \rightarrow 0,\quad  \Vert u_{1,\varepsilon}-u_{1,}\Vert_{L^2} \rightarrow 0,\quad \text{as } \varepsilon\rightarrow 0,
    \end{equation*}
    shows that
    \begin{equation*}
        \Vert U_{\varepsilon}(t,\cdot)\Vert_{L^2} \rightarrow 0,\quad \text{as } \varepsilon\rightarrow 0,
    \end{equation*}
    uniformly in $t\in [0,T]$, and this finishes the proof.    
\end{proof}

In the case when a classical solution exists in the sense of Lemma \ref{Lemma2}, the coherence theorem reads as follows. We avoid giving the proof since it is similar to the proof of Theorem \ref{Thm1 coherence}.

\begin{thm}\label{Thm2 coherence}
    Let $\psi$ be a Friedrichs-mollifier. Assume $(a,b)\in (L^{\frac{d}{s}}(\mathbb{R}^d)\cap L^{\frac{d}{2s}}(\mathbb{R}^d))\times L^{\frac{d}{s}}(\mathbb{R}^d)$ be non-negative and suppose that $u_0 \in H^{2s}(\mathbb{R}^d)$ and $u_1 \in H^{s}(\mathbb{R}^d)$. Then, for any regularising families $(a_{\varepsilon})_{\varepsilon}=(a\ast\psi_{\varepsilon})_{\varepsilon}$ and $(b_{\varepsilon})_{\varepsilon}=(b\ast\psi_{\varepsilon})_{\varepsilon}$ for the equation coefficients, and any regularising families $(u_{0,\varepsilon})_{\varepsilon}=(u_{0}\ast\psi_{\varepsilon})_{\varepsilon}$ and $(u_{1,\varepsilon})_{\varepsilon}=(u_{1}\ast\psi_{\varepsilon})_{\varepsilon}$ for the initial data, the net $(u_{\varepsilon})_{\varepsilon}$ converges to the classical solution (given by Lemma \ref{Lemma2}) of the Cauchy problem \eqref{Equation coherence} in $L^{2}$ as $\varepsilon \rightarrow 0$.
\end{thm}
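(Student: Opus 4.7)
The plan is to mirror the proof of Theorem \ref{Thm1 coherence}, replacing the $L^\infty$-energy estimate from Lemma \ref{Lemma1} by its $(L^{d/s}, L^{d/2s})$-counterpart from Lemma \ref{Lemma2}. I set $U_\varepsilon := u_\varepsilon - u$, where $u_\varepsilon$ is the very weak solution produced by Theorem \ref{Thm2 existence} and $u$ is the classical solution furnished by Lemma \ref{Lemma2}. Subtracting the two equations yields
\begin{equation*}
\partial_t^2 U_\varepsilon + (-\Delta)^s U_\varepsilon + a_\varepsilon U_\varepsilon + b_\varepsilon \partial_t U_\varepsilon = \Theta_\varepsilon, \quad U_\varepsilon(0)=u_{0,\varepsilon}-u_0,\quad \partial_t U_\varepsilon(0)=u_{1,\varepsilon}-u_1,
\end{equation*}
with source term $\Theta_\varepsilon(t,x) := -(a_\varepsilon-a)(x)\,u(t,x) - (b_\varepsilon-b)(x)\,u_t(t,x)$.

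I next apply Duhamel's principle (Proposition \ref{Prop Duhamel}) to split $U_\varepsilon = W_\varepsilon + \int_0^t V_\varepsilon(\cdot,\cdot;\tau)\,\d\tau$, where $W_\varepsilon$ is the homogeneous part carrying the perturbed data and $V_\varepsilon$ is the auxiliary problem with zero position and initial velocity $\Theta_\varepsilon(\tau,\cdot)$. Taking $L^2$-norms and invoking Minkowski's integral inequality produces
\begin{equation*}
\Vert U_\varepsilon(t,\cdot)\Vert_{L^2} \leq \Vert W_\varepsilon(t,\cdot)\Vert_{L^2} + \int_0^t \Vert V_\varepsilon(t,\cdot;\tau)\Vert_{L^2}\,\d\tau.
\end{equation*}
Both terms are controlled by the energy estimate \eqref{Energy estimate1} of Lemma \ref{Lemma2}. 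Since $a\in L^{d/s}\cap L^{d/2s}$ and $b\in L^{d/s}$, Young's convolution inequality ensures that the prefactors $\bigl(1+\Vert a_\varepsilon\Vert_{L^{d/s}}\bigr)\bigl(1+\Vert a_\varepsilon\Vert_{L^{d/2s}}\bigr)\bigl(1+\Vert b_\varepsilon\Vert_{L^{d/s}}\bigr)^2$ are bounded uniformly in $\varepsilon$, so only the contributions of $\Vert u_{0,\varepsilon}-u_0\Vert_{H^{2s}}$, $\Vert u_{1,\varepsilon}-u_1\Vert_{H^s}$ and $\Vert \Theta_\varepsilon(\tau,\cdot)\Vert_{H^s}$ dictate the rate of convergence.

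The data differences vanish as $\varepsilon\to 0$ by the classical $H^s$-approximation property of mollification. For the source I imitate the argument in the proof of Lemma \ref{Lemma2}: H\"older's inequality combined with the fractional Sobolev embedding of Proposition \ref{Prop. Sobolev estimate} gives
\begin{equation*}
\Vert (a_\varepsilon-a)u(\tau,\cdot)\Vert_{L^2} \lesssim \Vert a_\varepsilon-a\Vert_{L^{d/s}}\,\Vert (-\Delta)^{s/2}u(\tau,\cdot)\Vert_{L^2},
\end{equation*}
and analogously for the term involving $b_\varepsilon-b$. Since $u\in C([0,T];H^{2s})\cap C^1([0,T];H^s)$ is uniformly bounded on $[0,T]$ by Lemma \ref{Lemma2} and the $L^{d/s}$ and $L^{d/2s}$ norms of $a_\varepsilon-a$ and $b_\varepsilon-b$ converge to zero, $\Theta_\varepsilon \to 0$ in $L^2$ uniformly in $\tau\in[0,T]$.

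The main obstacle is that strict application of the Lemma \ref{Lemma2} energy estimate to the auxiliary problem for $V_\varepsilon$ requires the initial velocity $\Theta_\varepsilon(\tau,\cdot)$ to be controlled in $H^s$, not merely in $L^2$. I would handle this by running the H\"older--Sobolev estimate one additional time on $(-\Delta)^{s/2}\Theta_\varepsilon$, distributing the fractional derivative across the product $(a_\varepsilon-a)u$ via an appropriate product rule in fractional Sobolev spaces and exploiting the stronger $H^{2s}$- and $H^s$-bounds on $u$ and $u_t$ granted by Lemma \ref{Lemma2}. This product-rule step is the one point that goes beyond a mechanical copy of the proof of Theorem \ref{Thm1 coherence}; once it is in place, the conclusion $\Vert U_\varepsilon(t,\cdot)\Vert_{L^2}\to 0$ uniformly in $t\in[0,T]$ follows immediately.
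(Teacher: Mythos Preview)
Your overall architecture is exactly what the paper intends: the paper omits the proof, saying only that it is ``similar to the proof of Theorem~\ref{Thm1 coherence}'', and your Duhamel decomposition together with H\"older--Sobolev control of $\Theta_\varepsilon$ in $L^2$ is the natural translation of that argument to the $L^{d/s}$ framework. You also correctly spot the one point where the analogy is not automatic, namely that the full estimate \eqref{Energy estimate1} of Lemma~\ref{Lemma2} asks for the initial velocity in $H^s$, whereas you only control $\Theta_\varepsilon(\tau,\cdot)$ in $L^2$.

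The gap is in your proposed remedy. A fractional Leibniz (Kato--Ponce) inequality for $(-\Delta)^{s/2}\bigl[(a_\varepsilon-a)\,u\bigr]$ always places $s$ derivatives on \emph{each} factor in at least one term; you would therefore need a bound on $\Vert(-\Delta)^{s/2}(a_\varepsilon-a)\Vert_{L^p}$ for some $p$, but $a$ is merely in $L^{d/s}\cap L^{d/2s}$ with no smoothness, so this quantity is not controlled (and need not even be finite). The same obstruction applies to $(b_\varepsilon-b)\,u_t$. Exploiting $u\in H^{2s}$ does not save you: extra regularity of one factor does not, in standard product estimates, remove the derivative from the other.

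There is a much simpler fix that stays entirely within the paper's toolbox. Instead of invoking the final estimate \eqref{Energy estimate1}, use the intermediate energy-decay bound \eqref{Energy3} from the proof of Lemma~\ref{Lemma2}, which requires only $u_0\in H^s$ and $u_1\in L^2$. For the auxiliary problem \eqref{Equation2 V} one has $V_\varepsilon(\tau,\cdot;\tau)=0$, so \eqref{Energy3} gives
\[
\Vert\partial_t V_\varepsilon(t,\cdot;\tau)\Vert_{L^2}\ \lesssim\ \bigl(1+\Vert a_\varepsilon\Vert_{L^{d/2s}}\bigr)^{1/2}\,\Vert\Theta_\varepsilon(\tau,\cdot)\Vert_{L^2},
\]
uniformly in $t\in[\tau,T]$. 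Then the fundamental theorem of calculus, $V_\varepsilon(t,\cdot;\tau)=\int_\tau^t\partial_t V_\varepsilon(\sigma,\cdot;\tau)\,\d\sigma$, yields $\Vert V_\varepsilon(t,\cdot;\tau)\Vert_{L^2}\lesssim T\,\Vert\Theta_\varepsilon(\tau,\cdot)\Vert_{L^2}$, with the implicit constant uniform in $\varepsilon$ since $\Vert a_\varepsilon\Vert_{L^{d/2s}}\le\Vert a\Vert_{L^{d/2s}}$ by Young's inequality. The $W_\varepsilon$ piece is handled either by the full Lemma~\ref{Lemma2} (its data genuinely lie in $H^{2s}\times H^s$) or by the same trick. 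With this modification your argument goes through without any product rule.
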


\begin{rem}
    In Theorem \ref{Thm1 coherence}, we proved the coherence result, provided that
    \begin{equation*}
        \lVert a_{\varepsilon} - a\rVert_{L^{\infty}} \rightarrow 0,\quad\text{and}\quad \lVert b_{\varepsilon} - b\rVert_{L^{\infty}} \rightarrow 0,
    \end{equation*}
    as $\varepsilon\rightarrow 0$. This is in particular true if we consider coefficients from $C_0 (\mathbb{R}^d)$, the space of continuous functions on $\mathbb{R}^d$ vanishing at infinity which is a Banach space when endowed with the $L^{\infty}$-norm. For more details, see Section 3.1.10 in \cite{FR16}.
\end{rem}

\end{document}